\titleformat{\section}[block]
{\fontsize{12}{15}\bfseries\filcenter}
{\thesection.}
{1em}
{}
\titleformat{\subsection}[block]
{\fontsize{12}{15}\bfseries}
{\thesubsection}
{1em}
{}
\theoremstyle{plain}
\newtheorem{theorem}{Theorem}[section]
\newtheorem{proposition}[theorem]{Proposition}
\newtheorem{lemma}[theorem]{Lemma}
\newtheorem{corollary}[theorem]{Corollary}
\theoremstyle{definition}
\newtheorem{definition}[theorem]{Definition}
\newtheorem{example}[theorem]{Example}
\newtheorem{remark}[theorem]{Remark}
\def\Ker{\operatorname{Ker}}
\def\Nat{\operatorname{Nat}}
\def\coker{\operatorname{coker}}
\def\w{\widetilde}
\def\GpGd{\bm{\mathsf{GpGd}}}
\def\AGpGd{\bm{\mathsf{AbGpGd}}}
\def\XMod{\bm{\mathsf{XMod}}}
\def\Aut{\operatorname{Aut}}
\def\Der{\operatorname{Der}}
\def\Hol{\operatorname{Hol}}
\def\RD{\operatorname{D}}
\def\Im{\operatorname{Im}}
\def\G{\mathbb{G}}
\def\N{\mathbb{N}}
\def\H{\mathbb{H}}
\def\C{\mathbb{C}}
\def\D{\mathbb{D}}
\def\A{\mathcal{A}}
\def\W{\mathcal{W}}
\def\I{\mathcal{I}}
\def\leq{\leqslant}
\def\geq{\geqslant}
\def\epsilon{\varepsilon}
\begin{document}
\title{Actor of categories internal to groups}

\author{Tunçar ŞAHAN\thanks{Tunçar ŞAHAN (e-mail : tuncarsahan@gmail.com)}}
\affil{\small{Department of Mathematics, Aksaray University, Aksaray, TURKEY}}

\date{}
\maketitle

\begin{abstract}
In this study, using the Brown-Spencer theorem and in the ligth of the works of Norrie, in the category of internal categories within groups, also called group-groupoids, we interpret the notion of actor of a crossed module over groups. Further, we construct the action of a group-groupoid on a group-groupoid. Moreover, we give the explicit construction of semi-direct product of two group-groupoids and of holomorph of group-groupoids.
\end{abstract}

{\small\noindent{\bf Key Words:} Group-groupoid, actor, action, holomorph
\\ {\bf Classification:} 18D35, 20L05, 08A35}


\section{Introduction}
Crossed modules were introduced by Whitehead in \cite{Whitehead1946,Whitehead1948,Whitehead1949}. Crosed modules model all (connected) homotopy 2-types, i.e. connected spaces with $\pi_n(X)=0$ for $n\geq 2$ whilst groups model all (connected) homotopy 1-types. Crossed module concept generalizes the concept of normal subgroup and that of module since these are standart examples of crossed modules. A crossed module consists of two group homomorphisms $\alpha\colon A\rightarrow B$ and $B\rightarrow \Aut(A)$ (i.e. an action of $B$ on $A$ which denoted by $b\cdot a$) satisfying \textbf{(i)} $\alpha(b\cdot a)=b+\alpha(a)-b$ and \textbf{(ii)} $ \alpha(a)\cdot a_1 = a+a_1-a$ for all $a,a_1 \in A$ and $b\in B$. See \cite{Brown2011,Brown1982a,Huebschmann1980,Loday1978,Lue1966} for applications of crossed modules in other branches of mathematics.

Brown and Spencer \cite{Brown1976} proved that the category of group objects in the category of small categories, which is also called the category of group-groupoids, and the category of crossed modules are equivalent. This equivalence makes it possible to interpret a notion or a concept or a problem given in one of the above-mentioned categories in another. See for example \cite{Mucuk2018,Mucuk2015a}. It also has been shown that group-groupoids are in fact internal categories within the category of groups. The study of internal category theory was continued in the work of Datuashvili \cite{Datuashvili2002}. Cohomology theory of internal categories within certain algebraic categories which are called category of groups with operations is developed in \cite{Datuashvili1995} (see also \cite{Porter1987} for more information on internal categories in categories of groups with operations). The  equivalence of the categories  in \cite{Brown1976} enables us to generalize some results on group-groupoids to more general internal groupoids for a certain  algebraic category $\C$ (see for example \cite{Akiz2013}, \cite{Mucuk2015},  \cite{Mucuk2014}  and \cite{Mucuk2011}).

Let $G$ be a group and $\Aut(G)$ the group of automorphisms of $G$. It is a well-known fact that there exist a canonical group homomorphism $\varphi\colon G\rightarrow \Aut(G)$ which assigns an element of $G$ to its corresponding inner automorphism whose kernel is the centre of $G$.

In the category of groups, if there exist a short exact sequence of groups
\[\xymatrix@R=13mm@C=13mm{
	0 \ar[r] & N \ar@{^{(}->}[r]	& G \ar@{->>}[r] & H \ar[r]  & 0
}\]
such that $G$ acts on $N$, i.e. there exist a group homomorphism $G\rightarrow\Aut(N)$, then this action give rise to the following commutative diagram

\[\xymatrix@R=13mm@C=13mm{
	0 \ar[r] & N \ar@{^{(}->}[r] \ar[d]	& G \ar@{->>}[r] \ar[d] & H \ar[r] 
	\ar[d] & 0 \\
	0 \ar[r] & \operatorname{Inn}(N) \ar@{^{(}->}[r]	& \Aut(N) \ar@{->>}[r] & \operatorname{Out}(N) \ar[r] 
	& 0
}\]

The main object of this presented paper is to construct, in the category of group-groupoids, an object which is called the actor, that plays analogous role to automorphism group in the category of groups. Similar construction is studied for crossed modules over groups by Lue in \cite{Lue1979} and developed by Norrie in \cite{Norrie1987}. Norrie called this structure by actor crossed module.

The problem is finding an analogous object to $\Aut(N)$ in the category of group-groupoids that completes similar diagram to one given above. That object will be called actor group-groupoid.

Using actor group-groupoids it is possible to define the notions of centre, semi-direct product and holomorph in the category of group-groupoids. Furthermore, we define abelian and complete group-groupoids using the notion of centre for group-groupoids.

\section{Preliminaries}

A category $\C$ consist of a class $C_1$ of morphisms (or arrows), a class $C_0$ of objects, initial and final point maps $d_0,d_1\colon C_1\rightarrow C_0$, object inclusion (identity morphism) map  $\varepsilon\colon C_0\rightarrow C_1$ and a partial composition map $m\colon C_1 {_{d_0}\times_{d_1}} C_1 \rightarrow C_1$ denoted by $m(b,a)=b\circ a$ for all $(b,a)\in C_1 {_{d_0}\times_{d_1}} C_1$ where $C_1 {_{d_0}\times_{d_1}} C_1$ is the pullback of $d_0$ and $d_1$. These are subject to the following.
\begin{enumerate}[label={\textbf{(\roman{*})}}]
	\item\label{c1} $d_0\varepsilon =d_1\varepsilon ={{1}_{{{C}_{0}}}}$;
	\item\label{c2} $d_0m=d_0{{\pi }_{2}}$, $d_1m=d_1{{\pi }_{1}}$ ;
	\item\label{c3} $m\left( {{1}_{{{C}_{1}}}}\times m \right)=m\left( m\times {{1}_{{{C}_{1}}}} \right)$ and
	\item\label{c4} $m\left( \varepsilon d_0,{{1}_{{{C}_{1}}}} \right)=m\left( {{1}_{{{C}_{1}}}},\varepsilon d_1 \right)={{1}_{{{C}_{1}}}}$.
\end{enumerate}	

\[\xymatrix@R=10mm@C=10mm{
	C_1 {_{d_0}\times_{d_1}} C_1 \ar[r]^-{m}  & C_1 \ar@<.3ex>[r]^-{d_0} \ar@<-.3ex>[r]_-{d_1} 
	 & C_0  \ar@/_/@<-1ex>[l]_-{\varepsilon} 
}\]

In a category $\C$, a morphism $a\in C_1$ will be denoted by $a\colon x\rightarrow y$ where $x=d_0(a)$ and $y=d_1(a)$. For any pair of objects $x,y\in C_0$, $C_1(x,y)$ will stand for the set $d_{0}^{-1}(x)\cap d_{1}^{-1}(y)$. We denote $\varepsilon(x)$ with $1_x$ for $x\in C_0$. 

A groupoid is a category whose morphisms are invertible up to partial composition, i.e. are isomorphisms. That is, a category $\C$ is called a groupoid if for all $a\in C_1(x,y)$ there exist a unique morphism $a^{-1}\in C_1(y,x)$ such that $a^{-1}\circ a=1_x$ and $a\circ a^{-1}=1_y$.

A morphisms $f=(f_1,f_0)$ of a category $\C$ to $\D$, which is called a functor, is a pair of maps $f_1\colon C_1\rightarrow D_1$ and $f_0\colon C_0\rightarrow D_0$ such that the following diagram commutes.

\[\xymatrix@R=10mm@C=10mm{
	C_1 {_{d_0}\times_{d_1}} C_1 \ar[r]^-{m} \ar[d]_{f_1\times f_1}  & C_1 \ar[d]_{f_1} \ar@<.3ex>[r]^-{d_0} \ar@<-.3ex>[r]_-{d_1} 
	& C_0 \ar[d]^{f_0}  \ar@/_/@<-1ex>[l]_-{\varepsilon} 
	\\	
	D_1 {_{d_0}\times_{d_1}} D_1 \ar[r]^-{m}  & D_1 \ar@<.3ex>[r]^-{d_0} \ar@<-.3ex>[r]_-{d_1} 
	& C_0  \ar@/^/@<1ex>[l]^-{\varepsilon} 
}\]

It is straightforward that for a functor $f=(f_1,f_0)\colon\C\rightarrow\D$, $f_0$ can be defined via $f_1$ as follows: Let $x\in C_0$. Then $f_0(x)=d_0(1_x) ~(f_0(x)=d_1(1_x))$. Hence it is sufficient to use only $f_1$ for a functor $f=(f_1,f_0)$. So we briefly use $f$ without indices to denote a functor when no confusion arise.

\begin{definition}
Let $f,g\colon \C\rightarrow \D$ be two functors. A natural transformation $\eta\colon f\Rightarrow g$ from $f$ to $g$ is a class of morphisms $\eta(x)\colon f(x)\rightarrow g(x)$ in $D_1$ for all $x\in C_0$ such that $\eta(x)\circ f(a)=g(a)\circ \eta(y)$ for any $a\in C_1(x,y)$, i.e. the following diagram commutes.
\[\xymatrix@R=10mm@C=10mm{
f(x) \ar[r]^{\eta(x)} \ar[d]_{f(a)} &  g(x) \ar[d]^{g(a)}	\\	
f(y) \ar[r]_{\eta(y)} &  g(y)	
}\]	
\end{definition}

The set of all natural transformations between functors from a category $\C$ to a category $\D$ is denoted by $\operatorname{Nat}(\C,\D)$. In the case of $\C=\D$ we briefly denote $\operatorname{Nat}(\C,\C)$ by $\operatorname{Nat}(\C)$.

If $\eta(x)\colon f(x)\rightarrow g(x)$ is an isomorphism for all $x\in C_0$ then $\eta\colon f\Rightarrow g$ is called a natural isomorphism. In this case $f$ and $g$ are said to be \textbf{naturally isomorphic}. The set of all natural isomorphisms between functors from a category $\C$ to a category $\D$ is denoted by $\mathcal{N}(\C,\D)$. In the case of $\C=\D$ we briefly denote $\mathcal{N}(\C,\C)$ by $\mathcal{N}(\C)$. 

Since each morphism in a groupoid can be invertible then we can give the following fact.

\begin{lemma}
Let $\C$ be a category and $\G$ a groupoid. Then $\Nat(\C,\G)=\mathcal{N}(\C,\G)$, i.e. every natural transformation between functors from $\C$ to $\G$ is a natural isomorphism.
\end{lemma}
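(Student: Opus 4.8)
The plan is to take an arbitrary natural transformation $\eta\colon f\Rightarrow g$ between functors $f,g\colon\C\rightarrow\G$ and show that each component $\eta(x)\colon f(x)\rightarrow g(x)$ is an isomorphism in $\G$. This is immediate once we unpack the definition: since $\G$ is a groupoid, every morphism of $\G$ is invertible up to partial composition, so in particular the morphism $\eta(x)\in G_1$ admits a two-sided inverse $\eta(x)^{-1}\in G_1(g(x),f(x))$ with $\eta(x)^{-1}\circ\eta(x)=1_{f(x)}$ and $\eta(x)\circ\eta(x)^{-1}=1_{g(x)}$. Hence each $\eta(x)$ is by definition an isomorphism, and therefore $\eta$ is a natural isomorphism. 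This gives the inclusion $\Nat(\C,\G)\subseteq\mathcal{N}(\C,\G)$.

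The reverse inclusion $\mathcal{N}(\C,\G)\subseteq\Nat(\C,\G)$ is trivial, since by definition a natural isomorphism is in particular a natural transformation. Combining the two inclusions yields $\Nat(\C,\G)=\mathcal{N}(\C,\G)$.

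There is really no obstacle here — the statement is essentially a direct consequence of the definitions of groupoid and of natural (iso)morphism, exactly as the sentence preceding the lemma ("Since each morphism in a groupoid can be invertible then we can give the following fact.") suggests. The only point that merits a word of care is confirming that $\eta(x)^{-1}$ does have source $g(x)$ and target $f(x)$, i.e. that it is the correct component of a candidate inverse transformation; but this is forced by the groupoid axiom recalled in the preliminaries, which guarantees $a^{-1}\in C_1(y,x)$ whenever $a\in C_1(x,y)$. So the proof is a one-line verification, and I would present it as such.
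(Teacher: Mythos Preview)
Your proof is correct and matches the paper's treatment: the paper does not give an explicit proof of this lemma at all, presenting it as an immediate consequence of the fact that every morphism in a groupoid is invertible (as signalled by the sentence preceding the lemma). Your one-line verification that each component $\eta(x)$ is an isomorphism because it lies in a groupoid is exactly the intended argument.
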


Natural transformations can be composed in two ways: Vertical composition is a partial operation on the set $\Nat(\C,\D)$ of all natural transformations. Let $\eta,\zeta\in \Nat(\C,\D)$ with $\eta\colon f\Rightarrow g$, $\zeta\colon g\Rightarrow h$. Then the vertical composition $\zeta\circ_v\eta\colon f\Rightarrow h$ can be defined and for all $x\in C_0$ it is given by \[(\zeta\circ_v\eta)(x)=\zeta(x)\circ\eta(x).\] On the other hand horizontal composition is defined as follows: Let $\eta'\in\Nat(\D,\mathbb{E})$ with $\eta'\colon f'\Rightarrow g'$. Then for all $x\in C_0$, $\eta'\circ_h\eta\colon f'f \Rightarrow g'g$ is given by \[(\eta'\circ_h\eta)(x)=\eta'(g(x))\circ f'(\eta(x)).\] 

Let $\eta,\zeta\in \Nat(\C,\D)$ and $\eta',\zeta'\in \Nat(\D,\mathbb{E})$ as in the following diagram
\[\xymatrix@C+2pc{
	\C\ruppertwocell^f{\eta}
	\rlowertwocell_h{\zeta}
	\ar[r]_(.35)g & \D  \ruppertwocell^{f'}{\eta'}
	\rlowertwocell_{h'}{\zeta'}
	\ar[r]_(.35){g'} & \mathbb{E}}\]
Then horizontal and vertical compositions of natural transformations has the following interchange rule. 
\[(\zeta'\circ_h\zeta)\circ_v(\eta'\circ_h\eta)=(\zeta'\circ_v\eta')\circ_h(\zeta\circ_v\eta)\]

As similar to the homotopy of continuous functions, the homotopy of functors is defined in \cite[p.228]{Brown2006} and it has been proven in \cite{Mucuk2013} that two parallel functors are homotopic if and only if they are naturally isomorphic.

A group-groupoid $\G$ is a group object in the category of small categories. Group-groupoids are also known as internal categories within the category of groups. So group-groupoids, also called 2-groups, can be seen as 2-dimensional groups.

Let $\G$ be a group-groupoid. Then $\G$ is a groupoid such that the class of morphisms $G_1$ and the class of objects $G_0$ has group structures and the category structural maps $d_0,d_1,\varepsilon$ and $m$ are group homomorphisms. A group-groupoid $\G$ will be denoted by $\G=(G_1,G_0)$ for short when no confusion arise.

\begin{example}\begin{enumerate}[label={\textbf{(\roman{*})}}]
	\item Let $X$ be a topological group. Then the set $\pi X$ of all homotopy classes of paths in $X$ has a group-groupoid structure where the group of objects is $X$ \cite{Brown1976}.
	\item Let $G$ be a group. Then $(G\times G,G)$ is a group-groupoid where $G_1(x,y)=(G\times G)(x,y)=\{(x,y)\}$ for all $x,y\in G$.
	\item Let $G$ be a group. Then $(G,G)$ is a group-groupoid where
	\[G_1(x,y)=G(x,y) =\left\{\begin{array}{ccl}
	\{x\} &  ,  &  \mbox {$x=y$}\\
	\emptyset &  ,  &  \mbox{$x\neq y$}
	\end{array}
	\right. \]
	for all $x,y\in G$. This group-groupoid is called discrete.
	\end{enumerate}
\end{example}

A morphism of group-groupoids $f=(f_1,f_0)$ is a functor such that each component of $f$ is a group homomorphism. Thus one can define the category of group-groupoids which is denoted by $\GpGd$.

Recall from \cite{Brown1976} that $m$ being a group homomorphism implies that 
\[(b\circ a)+(d\circ c)=(b+d)\circ (a+c)\]
whenever left side (hence both sides) of the equation make sense for all $a,b,c,d\in G_1$. This equation is called the interchange law. 

As a consequence of the interchange law in a group-groupoid $\G$ the partial composition can be given in terms of group operation as follow: Let $a\in G_1(x,y)$ and $b\in G_1(y,z)$ be two morphism in $G_1$. Then 
\[\begin{array}{rl}
b\circ a & =\left( b+0 \right)\circ \left( 1_y +\left( -1_y +a \right) \right) \\
& =\left( b\circ 1_y \right)+\left( 0\circ \left( -1_y +a \right) \right) \\
& =b-1_y+a
\end{array}\]
and similarly $b\circ a=a-1_y+b$ \cite{Brown1976}. Here, if $y=0$ then $a+b=b+a$, that is, the elements of $\Ker d_0$ and of $\Ker d_1$ are commute. Also for $a\in G_1(x,y)$, $a^{-1}=1_x-a+1_y$ is the inverse of $a$ up to the partial composition \cite{Brown1976}.

A crossed module over groups is a group homomorphism $\alpha\colon A\rightarrow B$ with an (left) action of $B$ on $A$ denoted by $b\cdot a$ such that 
\begin{enumerate}[label={\textbf{(CM\arabic{*})}}, leftmargin=1.5cm]
	\item $\alpha(b\cdot a)=b+\alpha(a)-b$ and
	\item $\alpha(a)\cdot a_1=a+a_1-a$
\end{enumerate}	
for all $a,a_1\in A$ and $b\in B$. 

\begin{example}
	Following homomorphisms are standart examples of crossed modules.
	\begin{enumerate}[label={\textbf{(\roman{*})}}]
		\item Let $X$ be a topological space, $A\subset X$ and $x\in A$. Then the boundary map \[\rho\colon \pi_2(X,A,x)\rightarrow \pi_1(X,x)\] from the second relative homotopy group ${{\pi }_{2}}\left( X,A,x \right)$ to the fundamental group ${{\pi }_{1}}\left( X,x \right)$ at $x\in X$ is a crossed module.
		\item Let $G$ be a group and $N$ a normal subgroup of $G$. Then the inclusion function $N\xrightarrow{inc}G$ is a crossed module where the action of $G$ on $N$ is conjugation.
		\item Let $G$ be a group. Then the inner automorphism map $G\rightarrow \operatorname{Aut}(G)$ is a crossed module. Here the action is given by $\psi\cdot g=\psi(g)$ for $\psi\in\operatorname{Aut}(G)$ and $g\in G$.
		\item Given any $G$-module, $M$, the trivial homomorphism $0:M\to G$ is a crossed $G$-module with the given action of $G$ on $M$.
	\end{enumerate}
\end{example}

A morphism $f=\left\langle {{f}_{A}},{{f}_{B}} \right\rangle $ of crossed modules from $\left( A,B,\alpha  \right)$ to $\left( A',B',\alpha ' \right)$ is a pair of group homomorphisms ${{f}_{A}}:A\to A'$ and ${{f}_{B}}:B\to B'$ such that ${{f}_{B}}\alpha =\alpha '{{f}_{A}}$ and ${{f}_{A}}\left( b\cdot a \right)={{f}_{B}}\left( b \right)\cdot {{f}_{A}}\left( a \right)$ for all $a\in A$ and $b\in B$.
\[\xymatrix{ A \ar[r]^{\alpha} \ar[d]_{f_A}  &  B \ar[d]^{f_B} \\  A' \ar[r]_{\alpha'} & B' }\]

Crossed modules form a category with morphisms defined above. The category of crossed modules is denoted by $\XMod$.

Following theorem was given in \cite{Brown1976}. We sketch the proof since we need some details later.

\begin{theorem}[\textbf{Brown \& Spencer Theorem}]\cite{Brown1976}\label{BST}
	The category $\GpGd$ of group-groupoids and the category $\XMod$ of crossed modules are equivalent.
\end{theorem}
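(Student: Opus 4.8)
The plan is to make the equivalence explicit by exhibiting mutually quasi-inverse functors $\St\colon\GpGd\to\XMod$ and $\Cost\colon\XMod\to\GpGd$. To build $\St$, for a group-groupoid $\G=(G_1,G_0)$ I set $A=\Ker d_0$ and $B=G_0$, take $\alpha\colon A\to B$ to be the restriction $d_1|_A$ (a homomorphism since $d_1$ is), and give $B$ an action on $A$ by $b\cdot a=1_b+a-1_b$, which lands in $\Ker d_0$ because $d_0(1_b+a-1_b)=b+0-b=0$. Axiom \textbf{(CM1)} then follows by applying $d_1$, since $d_1(1_b+a-1_b)=b+\alpha(a)-b$; and for \textbf{(CM2)}, writing $y=d_1(a)=\alpha(a)$ for $a\in A$ so that the groupoid inverse is $a^{-1}=1_0-a+1_y=-a+1_y$, evaluating the composite $a_1\circ a^{-1}$ in the two ways permitted by the formula $q\circ p=q-1_{d_1(p)}+p=p-1_{d_1(p)}+q$ from the Preliminaries gives $a_1-a+1_y=-a+1_y+a_1$, which rearranges to $a+a_1-a=1_y+a_1-1_y=\alpha(a)\cdot a_1$. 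A morphism of group-groupoids restricts on arrows to $\Ker d_0$ and is the object component on objects, yielding a morphism of crossed modules, so $\St$ is a functor.

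For $\Cost$, given a crossed module $(A,B,\alpha)$ I take $G_0=B$ and $G_1=A\rtimes B$ with $(a,b)+(a',b')=(a+b\cdot a',\,b+b')$, and set $d_0(a,b)=b$, $d_1(a,b)=\alpha(a)+b$, $\varepsilon(b)=(0,b)$, and $(a',b')\circ(a,b)=(a'+a,\,b)$ whenever the arrows are composable (that is, whenever $b'=\alpha(a)+b$). I then verify that $(G_1,G_0)$ with these data is a groupoid --- the defining axioms of a category are a direct computation and the groupoid inverse (up to partial composition) of $(a,b)$ is $(-a,\,\alpha(a)+b)$ --- and that $d_0,d_1,\varepsilon$ and $m$ are group homomorphisms. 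A crossed-module morphism induces a morphism of the associated group-groupoids in the evident way, so $\Cost$ is a functor.

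It remains to produce natural isomorphisms $\St\Cost\Rightarrow\operatorname{id}_{\XMod}$ and $\Cost\St\Rightarrow\operatorname{id}_{\GpGd}$. Applying $\Cost$ and then $\St$ to $(A,B,\alpha)$ one finds $\Ker d_0=\{(a,0):a\in A\}\cong A$, object group $B$, and both the recovered boundary and the recovered action coinciding with the originals, naturally in $(A,B,\alpha)$. Applying $\St$ and then $\Cost$ to $\G$, the arrow group of $\Cost\St(\G)$ is $(\Ker d_0)\rtimes G_0$, and the map $(a,b)\mapsto a+1_b$ --- with inverse $g\mapsto(g-1_{d_0(g)},\,d_0(g))$ --- is a bijection; one checks it is a group isomorphism commuting with $d_0,d_1,\varepsilon$ and $m$, and natural in $\G$.

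I expect the one genuinely substantive step to be checking that the composition map $m$ of $\Cost(A,B,\alpha)$ is a group homomorphism --- equivalently, that the interchange law holds there --- since this is the sole point at which axiom \textbf{(CM1)} is really needed, and it is precisely the ``detail'' the later sections rely on. Everything else is bookkeeping with the composition-versus-addition formula from the Preliminaries together with the fact that $\Ker d_0$ and $\Ker d_1$ commute.
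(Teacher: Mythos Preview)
Your construction is exactly the one the paper sketches: the same two functors (your $\St$ and $\Cost$ are the paper's $\varphi$ and $\psi$), the same kernel--semidirect-product pairing, and the same formulas for source, target, identity and composition; the paper simply writes ``other details are straightforward so is omitted'' where you have supplied the verifications of \textbf{(CM1)}, \textbf{(CM2)} and the natural isomorphisms.

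One small correction to your closing commentary: the interchange law for $m$ in $\Cost(A,B,\alpha)$ actually rests on \textbf{(CM2)}, not \textbf{(CM1)}. Writing out $m$ on a sum of composable pairs and cancelling, the identity one must check reduces to $a_2 + b_2\cdot a_1' = b_2'\cdot a_1' + a_2$ with $b_2'=\alpha(a_2)+b_2$, and this follows from $(\alpha(a_2)+b_2)\cdot a_1' = \alpha(a_2)\cdot(b_2\cdot a_1') = a_2 + b_2\cdot a_1' - a_2$ by \textbf{(CM2)}. Axiom \textbf{(CM1)} is instead precisely what makes $d_1(a,b)=\alpha(a)+b$ a group homomorphism on $A\rtimes B$. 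This does not affect your argument, since you do propose to verify all the structural maps are homomorphisms; it only reassigns which axiom is doing which job.
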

\begin{proof}
	Define a functor
	\[\varphi :\GpGd\to \XMod\]
	as follows: Let $\G$ be a group-groupoid. Then $\varphi \left( \G \right)=\left( A,B,\alpha  \right)$ is a crossed modules where $A=\ker s$, $B={{G}_{0}}$, $\alpha $ is the restriction of $t$ and the action of $B$ on $A$ is given by $x\cdot a={{1}_{x}}+a-{{1}_{x}}$.
	
	Conversely, define a functor
	\[\psi :\XMod\to \GpGd\]
	as follows: Let $\left( A,B,\alpha  \right)$ be a crossed module. Then the semi-direct product group $A\rtimes B$ is a group-groupoid on $B$ where $s\left( a,b \right)=b$, $t\left( a,b \right)=\alpha \left( a \right)+b$, $\varepsilon \left( b \right)=\left( 0,b \right)$ and the composition is
	$\left( a',b' \right)\circ \left( a,b \right)=\left( a'+a,b \right)$
	where $b'=\alpha \left( a \right)+b$.
	
	Other details are straightforward so is omitted.
\end{proof}

\section{Actor of a group-groupoid}

Norrie \cite{Norrie1990} defined actor crossed module for a given crossed module using regular derivations of that crossed module and showed how actor crossed modules provides an analogue of automorphisms groups of groups. In this section, in the light of the Brown \& Spencer Theorem \cite[Theorem 1]{Brown1976} we define actor group-groupoid, group-groupoid actions and semi-direct product of group-groupoids. First we recall the construction of actor crossed module from \cite{Norrie1990}.

The notion of derivations first appears in the work of Whitehead \cite{Whitehead1948} under the name of crossed homomorphisms (see also \cite{Norrie1990}). Let $(A,B,\alpha)$ be a crossed module. A derivation of $(A,B,\alpha)$ is a map $d\colon B\rightarrow A$ such that \[d(b+b_1)=d(b)+b\cdot d(b_1)\] for all $b,b_1\in B$. Any derivation of $(A,B,\alpha)$ defines endomorphisms $\theta_d$ and $\sigma_d$ on $A$ and $B$ respectively, as follows:
\[\theta_d(a)=d\alpha(a)+a  \quad \text{and} \quad \sigma_d(b)=\alpha d(b)+b\]


It is easy to see that $(\theta_d,\sigma_d)\colon (A,B,\alpha)\rightarrow (A,B,\alpha)$ is a crossed module morphism and $\theta_d(d)=d(\sigma_d)$. All derivations of $(A,B,\alpha)$ are denoted by $\Der(B,A)$. Whitehead defined a multiplication on $\Der(B,A)$ as follows: Let $d_1,d_1\in\Der(B,A)$ then $d=d_1\circ d_2$ where
\[d(b)=d_1\sigma_{d_2}(b)+d_2(b) \ \ \ \ \	(=\theta_{d_1}d_2(b)+d_1(b)).\]

By this multiplication $(\Der(B,A),\circ)$ becomes a monoid where the identity derivation is zero morphism, i.e. $d\colon B\rightarrow A, b\mapsto d(b)=0$. Furthermore $\theta_d=\theta_{d_1}\theta_{d_2}$ and $\sigma_d=\sigma_{d_1}\sigma_{d_2}$. Group of units of $\Der(B,A)$ is called \emph{Whitehead group} and denoted by $\RD(B,A)$. These units are called \emph{regular derivations}. 

Following proposition which is given in \cite{Norrie1990} is a combined result from \cite{Whitehead1948} and \cite{Lue1979}.

\begin{proposition}\cite{Norrie1990}
	Let $(A,B,\alpha)$ be a crossed module. Then the followings are equivalent.
	\begin{enumerate}[label=\textbf{(\roman{*})}, leftmargin=1cm]
		\item $d\in\RD(B,A)$;
		\item $\theta\in\Aut A$;
		\item $\sigma\in\Aut B$.
	\end{enumerate}
\end{proposition}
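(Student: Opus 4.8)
The plan is to prove the cyclic chain of implications $(\mathrm{i})\Rightarrow(\mathrm{ii})\Rightarrow(\mathrm{iii})\Rightarrow(\mathrm{i})$. Everything rests on two facts already recorded above: that $d\mapsto\theta_d$ and $d\mapsto\sigma_d$ are monoid homomorphisms $(\Der(B,A),\circ)\to(\operatorname{End}A,\circ)$ and $(\Der(B,A),\circ)\to(\operatorname{End}B,\circ)$ (this is exactly $\theta_{d_1\circ d_2}=\theta_{d_1}\theta_{d_2}$ and $\sigma_{d_1\circ d_2}=\sigma_{d_1}\sigma_{d_2}$), together with the trivial observations $\theta_0=1_A$, $\sigma_0=1_B$ for the identity derivation $0$. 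Given this, $(\mathrm{i})\Rightarrow(\mathrm{ii})$ and $(\mathrm{i})\Rightarrow(\mathrm{iii})$ are immediate: a monoid homomorphism carries units to units, so if $d$ is regular with inverse $\bar d$, then $\theta_d\theta_{\bar d}=\theta_{\bar d}\theta_d=\theta_0=1_A$ and $\sigma_d\sigma_{\bar d}=\sigma_{\bar d}\sigma_d=\sigma_0=1_B$, whence $\theta_d\in\Aut A$ and $\sigma_d\in\Aut B$.

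For $(\mathrm{ii})\Rightarrow(\mathrm{iii})$ I would exploit the special shape of $(\theta_d,\sigma_d)$ as an endomorphism of $(A,B,\alpha)$. From the defining formulas, $\theta_d$ restricts to the identity on $\Ker\alpha$ and $\sigma_d$ induces the identity on $\Cok\alpha=B/\Im\alpha$, while the morphism condition $\alpha\theta_d=\sigma_d\alpha$ shows that on $\Im\alpha\cong A/\Ker\alpha$ the endomorphism induced by $\theta_d$ coincides with $\sigma_d|_{\Im\alpha}$; here $\Im\alpha$ is a normal subgroup of $B$ by \textbf{(CM1)}. Assuming $\theta_d\in\Aut A$, a short diagram chase in $0\to\Ker\alpha\to A\xrightarrow{\alpha}\Im\alpha\to0$ (using $\theta_d|_{\Ker\alpha}=1$) shows that the induced map on $\Im\alpha$, i.e.\ $\sigma_d|_{\Im\alpha}$, is an automorphism of $\Im\alpha$; then the short five lemma applied to $0\to\Im\alpha\to B\to\Cok\alpha\to0$ with outer maps $\sigma_d|_{\Im\alpha}$ and $1_{\Cok\alpha}$ gives $\sigma_d\in\Aut B$.

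For $(\mathrm{iii})\Rightarrow(\mathrm{i})$ I would construct the inverse derivation explicitly. Assuming $\sigma_d\in\Aut B$, set $\bar d\colon B\to A$, $\bar d(b)=-d(\sigma_d^{-1}(b))$. Three verifications remain: $(a)$ $\bar d\in\Der(B,A)$, a direct computation using that $\sigma_d^{-1}$ is a group homomorphism, that $d$ is a derivation, and \textbf{(CM2)} to rewrite the conjugation term that appears; $(b)$ $\sigma_{\bar d}=\sigma_d^{-1}$, obtained by expanding $\sigma_{\bar d}(b)=\alpha\bar d(b)+b$ and using $\alpha d(\sigma_d^{-1}(b))+\sigma_d^{-1}(b)=b$; and $(c)$ $\bar d\circ d=0=d\circ\bar d$, which follow at once from $(b)$ and the formula $(d_1\circ d_2)(b)=d_1\sigma_{d_2}(b)+d_2(b)$. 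Then $\bar d$ is a two-sided inverse of $d$ in $(\Der(B,A),\circ)$, so $d\in\RD(B,A)$, closing the cycle and also yielding $(\mathrm{i})\Leftrightarrow(\mathrm{ii})\Leftrightarrow(\mathrm{iii})$.

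The step I expect to be the main obstacle is $(\mathrm{ii})\Rightarrow(\mathrm{iii})$: one must be careful that the exact sequences involved are sequences of (possibly non-abelian) groups, that $\Im\alpha\trianglelefteq B$ so the quotient $\Cok\alpha$ is legitimate, and that only the surjectivity/injectivity deductions and the short five lemma valid in that generality are invoked. Apart from that, the only place requiring care is the bookkeeping of the conjugation term via \textbf{(CM2)} in verification $(a)$; everything else is routine.
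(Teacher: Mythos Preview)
The paper does not supply its own proof of this proposition: it is simply quoted as a combined result from \cite{Whitehead1948} and \cite{Lue1979} via \cite{Norrie1990}, with no argument given. So there is nothing to compare against line by line; what matters is whether your proposal is sound on its own.

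Your cyclic scheme is correct. The implications $(\mathrm{i})\Rightarrow(\mathrm{ii})$ and $(\mathrm{i})\Rightarrow(\mathrm{iii})$ via the monoid homomorphisms $d\mapsto\theta_d$, $d\mapsto\sigma_d$ are exactly the right way to use the identity $\theta_{d_1\circ d_2}=\theta_{d_1}\theta_{d_2}$ (resp.\ for $\sigma$) recorded in the paper. For $(\mathrm{ii})\Rightarrow(\mathrm{iii})$ your two-step use of the short five lemma is valid in the category of (non-abelian) groups: $\theta_d$ restricts to the identity on $\Ker\alpha$, so the induced map on $A/\Ker\alpha\cong\Im\alpha$ is an automorphism and equals $\sigma_d|_{\Im\alpha}$ by $\alpha\theta_d=\sigma_d\alpha$; and $\sigma_d$ induces the identity on $\Cok\alpha$ because $\sigma_d(b)-b=\alpha d(b)\in\Im\alpha$. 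The five lemma then forces $\sigma_d\in\Aut B$. Your caution about $\Im\alpha\trianglelefteq B$ via \textbf{(CM1)} is the right prerequisite.

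For $(\mathrm{iii})\Rightarrow(\mathrm{i})$ your candidate $\bar d(b)=-d(\sigma_d^{-1}(b))$ works, and the computation of $\bar d(b+b_1)$ does require \textbf{(CM2)} exactly where you flag it: writing $b=\alpha d(c)+c$ with $c=\sigma_d^{-1}(b)$, one gets $b\cdot d(c_1)=d(c)+c\cdot d(c_1)-d(c)$, which, combined with $\bar d(b)=-d(c)$, collapses the expression to $\bar d(b)+b\cdot\bar d(b_1)$. The checks $\sigma_{\bar d}=\sigma_d^{-1}$ and $\bar d\circ d=d\circ\bar d=0$ then follow formally. So the proposal is complete.

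As a remark, the classical treatments (Whitehead, Lue, Norrie) often prove $(\mathrm{ii})\Rightarrow(\mathrm{i})$ directly by an analogous inverse $\bar d(b)=-\theta_d^{-1}(d(b))$ rather than detouring through the five lemma; your route $(\mathrm{ii})\Rightarrow(\mathrm{iii})$ trades that explicit formula for a structural argument. Both are equally short; your version has the small advantage of explaining \emph{why} $\theta_d$ and $\sigma_d$ are automorphisms simultaneously, via the common quotient $\Im\alpha$.
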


Actor of a crossed module $(A,B,\alpha)$ is defined in \cite{Norrie1990} to be the crossed module \[(\RD(B,A),\Aut(A,B,\alpha),\Delta)\] where $\Delta(d)=\left\langle \theta_d,\sigma_d\right\rangle $ and the action of $\Aut(A,B,\alpha)$ on $\RD(B,A)$ is given by $\left\langle f,g \right\rangle \cdot d = fdg^{-1}$ for all $\left\langle f,g \right\rangle\in\Aut(A,B,\alpha)$ and $d\in \RD(B,A)$. Actor crossed module of $(A,B,\alpha)$ is denoted by $\A(A,B,\alpha)$.

Now we obtain analogous construction for group-groupoids. First we recall some preliminary definitions and properties for group-groupoids.

\begin{definition}\cite{Mucuk2015a}
Let $\G$ be a group-groupoid and $\H$ a subgroupoid of $\G$ such that $H_1$ is a subgroup of $G_1$. Then $\H$ is called a \textbf{subgroup-groupoid} of $\G$ and this denoted by $\H\leq \G$.
\end{definition}

\begin{definition}\label{nsgpgd}\cite{Mucuk2015a}
Let $\G$ be a group-groupoid and $\N$ a subgroupoid of $\G$ such that $N_1$ is a normal subgroup of $G_1$. Then $\N$ is called a \textbf{normal subgroup-groupoid} of $\G$ and this denoted by $\H\lhd \G$.
\end{definition}

\begin{definition}\cite{Mucuk2015a}
Let $\G$ be a group-groupoid and $\H$ a normal subgroup-groupoid of $\G$. Then the quotient group $G_1/N_1$ is a group-groupoid on the quotient group $G_0/N_0$. This group-groupoid is called the \textbf{quotient group-groupoid} and denoted by $\G/\N$.
\end{definition}

A morphism $f=(f_1,f_0)\colon \G\rightarrow \G'$ of group-groupoids is called an isomorphism (monomorphism, epimorphism and automorphism) if $f_1$ and $f_0$ are both isomorphisms (monomorphisms, epimorphisms and automorphisms). The group of all automorphisms of $\G$ is denoted by $\Aut(\G)$. The kernel of a group-groupoid morphism $f=(f_1,f_0)\colon \G\rightarrow \G'$ is the normal subgroup-groupoid $\Ker f=(\Ker f_1,\Ker f_0)$ of $\G$. The image of $f=(f_1,f_0)$ is the subgroup-groupoid $\Im f=(\Im f_1,\Im f_0)$ of $\G'$.

Let $\G$ be a group-groupoid and $\H,\mathbb{K}\leq\G$. Then $[\H,\mathbb{K}]=\left( [H_1,K_1],[H_0,K_0]\right) $ is a subgroup-groupoid of $\G$ and called the \textbf{commutator} of $\H$ and $\mathbb{K}$. In particular $[\G,\G]$, which is denoted by $\G'$, is called the \textbf{commutator subgroup-groupoid} or \textbf{derived subgroup-groupoid} of $\G$. It is easy to see from Definition \ref{nsgpgd} that $\G'$ is a normal subgroup-groupoid of $\G$ since $[G_1,G_1]$ is a normal subgroup of $G_1$.

A natural transformation $\eta\in\Nat(\G,\H)$ between group-groupoid morphisms from $\G$ to $\H$ is a usual natural transformation such that \[\eta(x_1+x_2)=\eta(x_1)+\eta(x_2)\] for all $x_1,x_2\in G_0$.

For a group-groupoid $\G$, $\Nat(\G)$ is a monoid where the monoid operation is horizontal composition of natural transformations. Here the identity element of $\Nat(G)$ is $1_{1_{\G}}$. A natural transformation $\eta$ in $\Nat(G)$ is called \textbf{regular} if it has an inverse up to horizontal composition. All regular natural transformations forms a group. This group is denoted by $\W(\G)$. 

\begin{proposition}
Let $\eta\in\Nat(\G)$ with $\eta\colon f\Rightarrow g$. Then $\eta\in\W(\G)$ if and only if $f,g\in\Aut(\G)$.
\end{proposition}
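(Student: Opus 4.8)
The plan is to work directly with the monoid $\left(\Nat(\G),\circ_h\right)$ whose group of units is, by definition, $\W(\G)$; so $\eta\colon f\Rightarrow g$ belongs to $\W(\G)$ precisely when it admits a two-sided horizontal inverse inside $\Nat(\G)$. The forward implication will then be a matter of reading off source and target functors of a horizontal composite, and the converse a matter of exhibiting the (forced) inverse and checking it is legitimate.

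For the forward implication, suppose $\zeta\colon f'\Rightarrow g'$ in $\Nat(\G)$ satisfies $\eta\circ_h\zeta=1_{1_\G}=\zeta\circ_h\eta$. Since $\eta\circ_h\zeta\colon ff'\Rightarrow gg'$ and $\zeta\circ_h\eta\colon f'f\Rightarrow g'g$, while $1_{1_\G}\colon 1_\G\Rightarrow 1_\G$, and since the two sides of an equality of natural transformations must have matching source and target functors, I obtain $ff'=f'f=gg'=g'g=1_\G$. As $f'$ and $g'$ are themselves group-groupoid morphisms, this exhibits $f$ and $g$ as isomorphisms in $\GpGd$, hence $f,g\in\Aut(\G)$.

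For the converse, assume $f,g\in\Aut(\G)$, so that $f^{-1},g^{-1}$ are available as group-groupoid morphisms. Combining $\eta\circ_h\zeta=1_{1_\G}$ with the component formula $\left(\eta'\circ_h\eta\right)(x)=\eta'(g(x))\circ f'(\eta(x))$ forces any horizontal inverse to be $\zeta\colon f^{-1}\Rightarrow g^{-1}$ given by $\zeta(x)=f^{-1}\bigl(\eta(g^{-1}(x))^{-1}\bigr)$ for $x\in G_0$, where the inner $(-)^{-1}$ is the groupoid inverse. I would then verify: (i) $\zeta(x)\colon f^{-1}(x)\to g^{-1}(x)$, which is immediate since $\eta(g^{-1}x)\colon f(g^{-1}x)\to x$ and $f^{-1}$ commutes with $d_0,d_1$; (ii) $\zeta\in\Nat(\G)$; and (iii) $\eta\circ_h\zeta=1_{1_\G}=\zeta\circ_h\eta$. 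For (iii), the source and target functors are $ff^{-1}=gg^{-1}=1_\G$ and $f^{-1}f=g^{-1}g=1_\G$, and the components come out right: $(\eta\circ_h\zeta)(x)=\eta(g^{-1}x)\circ f(\zeta(x))=\eta(g^{-1}x)\circ\eta(g^{-1}x)^{-1}=1_x$ and $(\zeta\circ_h\eta)(x)=\zeta(gx)\circ f^{-1}(\eta(x))=f^{-1}\bigl(\eta(x)^{-1}\circ\eta(x)\bigr)=f^{-1}(1_{f(x)})=1_x$, using $\zeta(gx)=f^{-1}(\eta(x)^{-1})$ and functoriality of $f^{-1}$. Hence $\eta\in\W(\G)$.

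The only point that needs real care is (ii). Additivity of $\zeta$ is cleanest to obtain from the observation that the groupoid-inversion map $\iota\colon G_1\to G_1$, $a\mapsto\varepsilon d_0(a)-a+\varepsilon d_1(a)$ $\bigl(=a^{-1}\bigr)$, is a group homomorphism: expanding $\iota(a+b)$ and $\iota(a)+\iota(b)$ using that $d_0,d_1,\varepsilon$ are homomorphisms, their equality amounts to interchanging $\varepsilon d_0(b)-b\in\Ker d_0$ with $-a+\varepsilon d_1(a)\in\Ker d_1$, which is allowed because $\Ker d_0$ and $\Ker d_1$ commute elementwise in a group-groupoid (recalled in Section 2); then $\zeta=f^{-1}\circ\iota\circ(\eta\circ g^{-1})$ is a composite of additive maps. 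Naturality of $\zeta$ is a direct diagram chase: apply the naturality square of $\eta$ to the arrow $g^{-1}(a)$ (for $a\colon x\to y$), take groupoid inverses, apply the functor $f^{-1}$, and cancel identities to reach $g^{-1}(a)\circ\zeta(x)=\zeta(y)\circ f^{-1}(a)$. I expect the main (if modest) obstacle to be exactly this: realising that one must first check that inversion on $G_1$ is a homomorphism before $\zeta$ even qualifies as a member of $\Nat(\G)$; everything else is routine bookkeeping with the horizontal-composition formula. (One could alternatively deduce the proposition from the Brown \& Spencer equivalence together with Norrie's Proposition recalled above, but the direct argument is shorter and self-contained.)
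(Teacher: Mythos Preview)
Your proof is correct and follows essentially the same route as the paper: the forward direction is identical, and for the converse you exhibit the very same candidate inverse $\zeta(x)=f^{-1}\bigl(\eta(g^{-1}(x))^{-1}\bigr)$ (equivalently $[f^{-1}(\eta(g^{-1}(x)))]^{-1}$, since $f^{-1}$ preserves groupoid inverses). The only difference is one of detail: the paper merely names $\zeta$ and asserts it works, whereas you carry out the verifications (i)--(iii), including the nice observation that additivity of $\zeta$ reduces to the inversion map $\iota(a)=1_{d_0(a)}-a+1_{d_1(a)}$ being a group homomorphism, which in turn rests on the elementwise commutation of $\Ker d_0$ with $\Ker d_1$ recalled in Section~2.
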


\begin{proof}
If $\eta\in\W(\G)$ then there exist $\eta'\in\Nat(\G)$ with $\eta'\colon f'\Rightarrow g'$ such that $\eta\circ_h\eta'=1_{1_{\G}}=\eta'\circ_h\eta$. So $ff'=1_{\G}=f'f$ and $gg'=1_{\G}=g'g$. Thus $f'=f^{-1}$ and $g'=g^{-1}$, i.e. $f,g\in\Aut(\G)$.

Conversely, let $f,g\in\Aut(\G)$. Then horizontal inverse of $\eta$ is given by
\[\eta^{-h}(x)=\left[ f^{-1} \left( \eta\left( g^{-1}\left( x \right) \right)  \right)  \right]^{-1}. \] Thus $\eta\in\W(\G)$. This completes the proof.
\end{proof}

In the following lemma we define an automorphism using regular natural transformations which will be useful later.

\begin{lemma}\label{auteta}
Let $\eta\in\W(\G)$ with $\eta\colon f\Rightarrow g$. Then the map defined by
\[\begin{array}{rcccl}
F^{\eta} & : & G_1 & \rightarrow  & G_1  \\
{} & {} & a & \mapsto  & F^{\eta}(a)=g(a)\circ\eta(d_0(a))=\eta(d_1(a))\circ f(a) \\
\end{array}\]
is an automorphism of $G_1$.
\end{lemma}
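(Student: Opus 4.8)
The plan is to prove separately that $F^\eta$ is a homomorphism of the group $G_1$ and that it is bijective, keeping careful track of sources and targets so that every partial composite written down is legitimate. First I would record the easy preliminaries: the two formulas for $F^\eta(a)$ coincide because $\eta\colon f\Rightarrow g$ is a natural transformation, so the naturality square at $a$ gives $\eta(d_1(a))\circ f(a)=g(a)\circ\eta(d_0(a))$; and $g(a)\circ\eta(d_0(a))$ is defined because $d_1(\eta(d_0(a)))=g(d_0(a))=d_0(g(a))$. For later use I would also note $d_0(F^\eta(a))=d_0(\eta(d_0(a)))=f(d_0(a))$ and $d_1(F^\eta(a))=g(d_1(a))$, and recall from the preceding proposition that $\eta\in\W(\G)$ forces $f,g\in\Aut(\G)$, so $f^{-1}$ and $g^{-1}$ are available as group-groupoid morphisms.

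For the homomorphism property, let $a,b\in G_1$ and put $x=d_0(a)$, $x'=d_0(b)$. Since $d_0\colon G_1\to G_0$ and $g_1\colon G_1\to G_1$ are group homomorphisms and $\eta$ is additive on objects,
\[F^\eta(a+b)=g(a+b)\circ\eta(d_0(a+b))=\bigl(g(a)+g(b)\bigr)\circ\bigl(\eta(x)+\eta(x')\bigr).\]
Applying the interchange law of the group-groupoid $\G$ rewrites the right-hand side as $\bigl(g(a)\circ\eta(x)\bigr)+\bigl(g(b)\circ\eta(x')\bigr)=F^\eta(a)+F^\eta(b)$; one only has to check that $g(a)\circ\eta(x)$, $g(b)\circ\eta(x')$ and the outer sum–composite are all defined, which is immediate from the source/target computation above. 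Hence $F^\eta$ is an endomorphism of the group $G_1$.

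For bijectivity I would exhibit a two-sided inverse explicitly. Using $f^{-1},g^{-1}\in\Aut(\G)$, define $\Phi\colon G_1\to G_1$ by $\Phi(c)=g^{-1}\bigl(c\circ\eta(f^{-1}(d_0(c)))^{-1}\bigr)$. Tracking endpoints, $\eta(f^{-1}(d_0(c)))$ is a morphism $d_0(c)\to g(f^{-1}(d_0(c)))$, so the inner composite is legitimate and $\Phi(c)$ is a morphism $f^{-1}(d_0(c))\to g^{-1}(d_1(c))$; in particular $d_0(\Phi(c))=f^{-1}(d_0(c))$. A short calculation, using that $g^{-1}$ is a functor and that $\eta(x)$ is invertible with $\eta(x)^{-1}\circ\eta(x)=1_{f(x)}$ and $\eta(x)\circ\eta(x)^{-1}=1_{g(x)}$, then yields $F^\eta(\Phi(c))=c$ and $\Phi(F^\eta(a))=a$ for all $a,c\in G_1$. (Equivalently, injectivity follows by cancelling $\eta(d_0(a))$ in the groupoid together with injectivity of $f$ and $g$, and surjectivity follows by verifying that the above $\Phi(c)$ is a preimage of $c$.) Combined with the previous paragraph, this gives $F^\eta\in\Aut(G_1)$.

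The genuinely error-prone part — and therefore the main thing to be careful about — is the source/target bookkeeping: expressions such as $g(a)\circ\eta(d_0(a))$ or $c\circ\eta(f^{-1}(d_0(c)))^{-1}$ denote partial composites defined only on pullbacks, and at each step one must check that the relevant endpoints agree (they do, precisely because $f,g$ are functors and $\eta$ is natural). Beyond this there is no conceptual obstacle: the interchange law supplies the homomorphism property, and invertibility of $f$, $g$ and of the components $\eta(x)$ supplies the inverse.
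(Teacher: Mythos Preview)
Your proof is correct and follows essentially the same approach as the paper: the homomorphism argument via the interchange law is identical, and your inverse map $\Phi(c)=g^{-1}\bigl(c\circ\eta(f^{-1}(d_0(c)))^{-1}\bigr)$ is precisely the preimage the paper writes down for surjectivity. The only cosmetic difference is that the paper treats injectivity and surjectivity separately (injectivity by a brief case split on whether the endpoints agree, then cancelling $\eta(x)$ and using that $g$ is an isomorphism), whereas you verify that $\Phi$ is a two-sided inverse in one stroke; this is a minor repackaging rather than a different route.
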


\begin{proof}
First we show that $F^{\eta}$ is a group homomorphism. Let $a\in G_1(x,y)$  and $a_1\in G_1(x_1,y_1)$. Then
\[\begin{array}{rcl}
F^{\eta}(a+a_1) &=& g(a+a_1)\circ\eta(x+x_1)\\ 
&=&\left( g(a) + g(a_1)\right) \circ\left( \eta(x) + \eta(x_1)\right)  \\
&=&\left( g(a) \circ \eta(x) \right) + \left(g(a_1) \circ \eta(x_1)\right)  \\
&=& F^{\eta}(a)+F^{\eta}(a_1).
\end{array}\]

Now let $a,a_1\in G_1$ such that $a\neq a_1$. It is obvious that $F^{\eta}(a)\neq F^{\eta}(a_1)$ if at least one of the initial or final points of $a$ and $a_1$ is different since the initial or final points of $F^{\eta}(a)$ and $F^{\eta}(a_1)$ is different in that case. Thus let $a,a_1\in G_1(x,y)$. Then 
\[F^{\eta}(a) = g(a)\circ\eta(x)\neq g(a_1)\circ\eta(x) = F^{\eta}(a_1)\]
since $g$ is an isomorphism. Hence $F^{\eta}$ is a monomorphism.

Finally, let $b\in G_1(m,n)$. If we set $a=g^{-1}\left( b\circ \eta(f^{-1}(m))^{-1} \right)$ then $a\in G_1(f^{-1}(m),g^{-1}(n))$ and
\[\begin{array}{rcl}
F^{\eta}(a) &=& g(a)\circ\eta(f^{-1}(m))\\ 
&=& \left( b\circ \eta(f^{-1}(m))^{-1}\right)  \circ \eta(f^{-1}(m))\\
&=& b.
\end{array}\]
Hence $F^{\eta}$ is an epimorphism. So this completes the proof.
\end{proof}

\[\xymatrix@R=13mm@C=13mm{
	f(x) \ar[r]^{\eta(x)} \ar[d]_{f(a)} \ar@{..>}[dr]|{F^{\eta}(a)} &  g(x) \ar[d]^{g(a)}	\\	
	f(y) \ar[r]_{\eta(y)} &  g(y)	
}\]

\begin{remark}\label{remF}
It is easy to see that $F^{\eta}(1_x)=\eta(x)$ for all $x\in G_0$. Also $F^{\left( 1_{1_{\G}}\right) }=1_{G_{1}}$.
\end{remark}

\begin{lemma}\label{mono}
Let $\eta,\tau\in \W(\G)$. If $\eta\neq \tau$ then $F^{\eta}\neq F^{\tau}$.
\end{lemma}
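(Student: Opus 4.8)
The plan is to argue by contraposition: assuming $F^{\eta}=F^{\tau}$ as maps $G_1\to G_1$, I will show $\eta=\tau$ as natural transformations. The key observation is Remark \ref{remF}, which tells us that $F^{\eta}$ recovers $\eta$ on identity morphisms via $F^{\eta}(1_x)=\eta(x)$ for every $x\in G_0$. So the first and essentially only step is to evaluate the assumed equality $F^{\eta}=F^{\tau}$ on the identity morphism $1_x$ for an arbitrary object $x\in G_0$.

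Concretely, suppose $\eta\colon f\Rightarrow g$ and $\tau\colon f'\Rightarrow g'$ are elements of $\W(\G)$ with $F^{\eta}=F^{\tau}$. Applying both sides to $1_x$ and using Remark \ref{remF} gives $\eta(x)=F^{\eta}(1_x)=F^{\tau}(1_x)=\tau(x)$. Since $x\in G_0$ was arbitrary, the families $\{\eta(x)\}_{x\in G_0}$ and $\{\tau(x)\}_{x\in G_0}$ coincide, and a natural transformation in this setting is completely determined by this family of component morphisms; hence $\eta=\tau$. Taking the contrapositive, if $\eta\neq\tau$ then $F^{\eta}\neq F^{\tau}$, which is the claim.

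I do not anticipate a genuine obstacle here; the content of the lemma is really just that the assignment $\eta\mapsto F^{\eta}$ has a one-sided inverse on identities (namely restriction along $\varepsilon\colon G_0\to G_1$). The only point requiring a word of care is making explicit that two natural transformations with the same source objects and the same component morphisms are equal as natural transformations — but this is immediate from the definition of natural transformation given earlier, since a natural transformation $\eta\colon f\Rightarrow g$ is by definition the class of morphisms $\{\eta(x)\}_{x\in G_0}$ (and the source and target functors $f,g$ are themselves determined by $F^{\eta}$ via $f(a)=\eta(d_1(a))^{-1}\circ F^{\eta}(a)$ and $g(a)=F^{\eta}(a)\circ\eta(d_0(a))^{-1}$, so no ambiguity remains).
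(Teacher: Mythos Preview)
Your proof is correct and follows essentially the same approach as the paper: both arguments hinge on Remark \ref{remF}, evaluating $F^{\eta}$ and $F^{\tau}$ on identity morphisms $1_x$ to recover the components $\eta(x)$ and $\tau(x)$. The only cosmetic difference is that the paper argues directly (from $\eta\neq\tau$ pick an $x$ with $\eta(x)\neq\tau(x)$ and conclude $F^{\eta}(1_x)\neq F^{\tau}(1_x)$), whereas you phrase it as a contrapositive and add a sentence on why equal components force equal natural transformations.
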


\begin{proof}
If $\eta\neq \tau$ then there exist at least one object $x\in G_0$ such that $\eta(x) \neq \tau(x)$. Then by Remark \ref{remF}  \[F^{\eta}(1_x)=\eta(x)\neq \tau(x)=F^{\tau}(1_x).\] This completes the proof.
\end{proof}

As a result of this lemma, first of all if $F^{\eta}= F^{\tau}$ for some $\eta,\tau\in\W(\G)$ then $\eta=\tau$. Also we can say that the map which assigns a regular transformation $\eta$ to an automorphism $F^{\eta}$ of $G_1$ is injective.

We will now examine how this automorphism behaves under horizontal and vertical compositions of natural transformations. Let $\eta,\eta',\tau\in\W(\G)$ with $\eta\colon f\Rightarrow g$, $\eta'\colon f'\Rightarrow g'$ and $\tau\colon g\Rightarrow h$. Then for any $a\in G_1(x,y)$

\[F^{\left( \tau\circ_v \eta\right) }(a)=F^{\tau}(a)\circ \eta(x) = \left( h(a)\circ \tau(x)\right)  \circ \eta(x)\]
or equivalently
\[F^{\left( \tau\circ_v \eta\right) }(a)=\tau(y) \circ F^{\eta}(a) = \tau(y) \circ \left( \eta(y)\circ f(a)\right).\]

On the other side 
\[F^{\left( \eta'\circ_h \eta\right) }(a)=\eta'(g(y))\circ f'(F^{\eta}(a))=F^{\eta'}(g(a))\circ f'(\eta(x)).\]

\begin{corollary}\label{lemhorcomF}
Let $\eta,\eta'\in\W(\G)$. Then $F^{\left( \eta'\circ_h \eta\right)}=F^{\eta'}\circ F^{\eta}$.
\end{corollary}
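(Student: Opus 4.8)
The plan is to verify the identity $F^{(\eta'\circ_h\eta)}=F^{\eta'}\circ F^{\eta}$ by evaluating both sides on an arbitrary morphism $a\in G_1(x,y)$ and simplifying using the formulas already established in the excerpt. Recall that if $\eta\colon f\Rightarrow g$ and $\eta'\colon f'\Rightarrow g'$, then $\eta'\circ_h\eta\colon f'f\Rightarrow g'g$, so $F^{(\eta'\circ_h\eta)}(a)=(g'g)(a)\circ(\eta'\circ_h\eta)(x)$. By definition of horizontal composition, $(\eta'\circ_h\eta)(x)=\eta'(g(x))\circ f'(\eta(x))$, and from the displayed computation just before the corollary we already have the convenient rewriting $F^{(\eta'\circ_h\eta)}(a)=F^{\eta'}(g(a))\circ f'(\eta(x))$.

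First I would compute the right-hand side: $F^{\eta}(a)=g(a)\circ\eta(x)$ lies in $G_1$ with initial point $f(x)$ and final point $g(y)$, so applying $F^{\eta'}$ gives, by the ``$\eta(d_1(-))\circ f(-)$'' form of the definition in Lemma~\ref{auteta}, $F^{\eta'}(F^{\eta}(a))=\eta'(d_1(F^{\eta}(a)))\circ f'(F^{\eta}(a))=\eta'(g(y))\circ f'(g(a)\circ\eta(x))$. Since $f'$ is a functor it preserves partial composition, so this equals $\eta'(g(y))\circ f'(g(a))\circ f'(\eta(x))$. On the other hand, $F^{\eta'}(g(a))=\eta'(d_1(g(a)))\circ f'(g(a))=\eta'(g(y))\circ f'(g(a))$, so the left-hand side (in the rewritten form) is $\eta'(g(y))\circ f'(g(a))\circ f'(\eta(x))$ as well. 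The two expressions coincide, which is exactly the claim.

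I would also double-check the composability of all the partial compositions appearing: $f'(g(a))\colon f'g(x)\to f'g(y)$, $f'(\eta(x))\colon f'f(x)\to f'g(x)$, and $\eta'(g(y))\colon f'g(y)\to g'g(y)$, so the chain $\eta'(g(y))\circ f'(g(a))\circ f'(\eta(x))$ is well-defined and represents a morphism $f'f(x)\to g'g(y)$, consistent with $F^{(\eta'\circ_h\eta)}(a)$ being built from the functor $f'f$ and the natural transformation $\eta'\circ_h\eta$. The main (and only real) obstacle is bookkeeping: keeping track of which object each $\eta$- or $\eta'$-component sits over and invoking functoriality of $f'$ at the right moment; once the definitions from Lemma~\ref{auteta} and the displayed horizontal-composition formula are written out carefully, the identity falls out immediately with no further computation.
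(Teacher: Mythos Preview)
Your proof is correct and follows essentially the same route as the paper. The paper does not give a separate proof of the corollary; it simply records the displayed identity $F^{(\eta'\circ_h\eta)}(a)=\eta'(g(y))\circ f'(F^{\eta}(a))$ immediately before the statement, from which the corollary is read off since the right-hand side is exactly $F^{\eta'}(F^{\eta}(a))$ by the ``$\eta'(d_1(-))\circ f'(-)$'' form of Lemma~\ref{auteta}. Your write-up just makes that last identification explicit and checks the bookkeeping.
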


\begin{proposition}
Let $\G$ be a group-groupoid. Then $(\W(\G),\Aut(\G),\w{d_0},\w{d_1},\w{\varepsilon},\w{m})$ is equipped with a group-groupoid structure where $\w{d_0}(\eta)=f$, $\w{d_1}(\eta)=g$, $\w{\varepsilon}(f)=1_f$ and $\eta'~\w{\circ}~\eta=\eta'\circ_v\eta$ for all $\eta\colon f\Rightarrow g,\eta'\colon g\Rightarrow h \in\W(\G)$.	
\end{proposition}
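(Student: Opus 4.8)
The plan is to verify that the data $(\W(\G),\Aut(\G),\w{d_0},\w{d_1},\w{\varepsilon},\w{m})$ satisfies the axioms \ref{c1}--\ref{c4} of a category, that the resulting category is a groupoid, and finally that all four structural maps are group homomorphisms with respect to the known group structures on $\W(\G)$ (horizontal composition) and on $\Aut(\G)$ (composition of automorphisms); this last point is what upgrades the internal category to a group-groupoid. I would organise the argument in three stages.

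First, the category axioms. Axiom \ref{c1} is immediate: for $f\in\Aut(\G)$ one has $1_f\colon f\Rightarrow f$, so $\w{d_0}(\w{\varepsilon}(f))=\w{d_1}(\w{\varepsilon}(f))=f$. For \ref{c2}, if $\eta'~\w{\circ}~\eta$ is defined then $\eta\colon f\Rightarrow g$ and $\eta'\colon g\Rightarrow h$, and the vertical composite is a natural transformation $f\Rightarrow h$, so $\w{d_0}(\eta'~\w{\circ}~\eta)=f=\w{d_0}(\eta)$ and $\w{d_1}(\eta'~\w{\circ}~\eta)=h=\w{d_1}(\eta')$. Associativity \ref{c3} and the identity law \ref{c4} follow pointwise from the corresponding properties of composition in the groupoid $\H$ underlying $\G$ (here $\H=\G$), since $(\eta'\circ_v\eta)(x)=\eta'(x)\circ\eta(x)$ and $1_f(x)=1_{f(x)}$; in particular the pointwise computation $(\eta\circ_v 1_f)(x)=\eta(x)\circ 1_{f(x)}=\eta(x)$ gives \ref{c4}. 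For the groupoid property, note that each $\eta\colon f\Rightarrow g$ in $\W(\G)$ has $\eta(x)$ invertible in $G_1$ for every $x$ (by Lemma referenced in the excerpt, natural transformations into a groupoid are natural isomorphisms), so $\bar\eta$ defined by $\bar\eta(x)=\eta(x)^{-1}$ is a natural transformation $g\Rightarrow f$; one checks it lies in $\W(\G)$ because $\bar\eta(x_1+x_2)=\eta(x_1+x_2)^{-1}=(\eta(x_1)+\eta(x_2))^{-1}=\eta(x_1)^{-1}+\eta(x_2)^{-1}$ (inverses in the abelian-on-$\Ker$ situation, or simply in $G_1$), and it is the two-sided vertical inverse of $\eta$.

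Second, and this is the step I expect to carry the real content, I would check that $\w{d_0},\w{d_1},\w{\varepsilon}$ and $\w{m}$ are group homomorphisms, where the group operation on $\W(\G)$ is horizontal composition $\circ_h$. For $\w{d_0}$: if $\eta\colon f\Rightarrow g$ and $\eta'\colon f'\Rightarrow g'$ then $\eta\circ_h\eta'\colon ff'\Rightarrow gg'$, so $\w{d_0}(\eta\circ_h\eta')=ff'=\w{d_0}(\eta)\,\w{d_0}(\eta')$, and likewise for $\w{d_1}$; the unit $1_{1_\G}$ of $\W(\G)$ maps to $1_\G$, the unit of $\Aut(\G)$. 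For $\w{\varepsilon}$ one must see $1_{ff'}=1_f\circ_h 1_{f'}$, which follows from $(1_f\circ_h 1_{f'})(x)=1_f(f'(x))\circ f(1_{f'(x)})=1_{ff'(x)}$. The crucial identity is that $\w{m}$, i.e. vertical composition $\circ_v$, is a homomorphism for $\circ_h$: given composable pairs $\eta'~\w{\circ}~\eta$ and $\zeta'~\w{\circ}~\zeta$ in $\W(\G)$, we need $(\eta'\circ_v\eta)\circ_h(\zeta'\circ_v\zeta)=(\eta'\circ_h\zeta')\circ_v(\eta\circ_h\zeta)$ — but this is precisely the interchange law for horizontal and vertical composition of natural transformations, stated in the Preliminaries. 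One must also record that $\circ_h$ respects the pullback, i.e. that $\eta'\circ_h\zeta'$ and $\eta\circ_h\zeta$ are vertically composable when $\eta'~\w{\circ}~\eta$ and $\zeta'~\w{\circ}~\zeta$ are, which is clear from the source/target bookkeeping above.

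Third, a brief remark closing the loop: since all structural maps are group homomorphisms and the underlying data form a groupoid, $(\W(\G),\Aut(\G))$ is a group object in $\Cat$, i.e. a group-groupoid, as claimed. The main obstacle is nothing deep but rather bookkeeping discipline — keeping straight that the group law on morphisms is $\circ_h$ while the categorical composition is $\circ_v$, so that "the structural maps are homomorphisms'' is exactly the interchange law together with the functoriality of $\circ_h$ on identities; everything else is a pointwise transcription of the groupoid axioms of $\G$. I would present the proof by stating these identifications and then citing the interchange law for the key clause, leaving the pointwise verifications to the reader.
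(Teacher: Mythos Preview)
Your proposal is correct and follows essentially the same line as the paper: the paper does not give a formal proof but simply remarks, immediately after the statement, that the needed interchange law is the well-known identity $(\eta'\circ_v\eta)\circ_h(\eta_1'\circ_v\eta_1)=(\eta'\circ_h\eta_1')\circ_v(\eta\circ_h\eta_1)$ for natural transformations, which is exactly the key step you single out in your second stage. Your write-up is more thorough in spelling out the groupoid axioms and the homomorphism checks for $\w{d_0},\w{d_1},\w{\varepsilon}$, but these are precisely the routine verifications the paper leaves implicit.
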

\[\xymatrix@R=10mm@C=10mm{
	\W(\G) {_{\w{d_0}}\times_{\w{d_1}}} \W(\G) \ar[r]^-{\w{m}}  & \W(\G) \ar@<.3ex>[r]^-{\w{d_0}} \ar@<-.3ex>[r]_-{\w{d_1}} 
	& \Aut(\G)  \ar@(u,u)[l]_-{\w{\varepsilon}} 
}\]

Here the interchange law comes from the well-known identiy \[(\eta'\circ_v\eta)\circ_h(\eta_1'\circ_v\eta_1)=(\eta'\circ_h\eta_1')\circ_v(\eta\circ_h\eta_1)\] for horizontal and vertical compositiom of natural transformations whenever one side (hence both sides) of the equation make sense. 

\begin{definition}
Let $\G$ be a group-groupoid. Then the group-groupoid $(\W(\G),\Aut(\G))$ defined above is called the \textbf{actor group-groupoid} of $\G$ and denoted by $\A(\G)$.
\end{definition}

\begin{example}\begin{enumerate}[label={\textbf{(\roman{*})}}]
\item Let $X$ be a topological group. Then $\A(\pi X)\cong(\mathcal{H}(X),\Aut(X))$ where $\Aut(X)$ is the set of all topological group automorphisms of $X$ and $\mathcal{H}(X)$ is the set of all homotopies between the topological group automorphisms of $X$.
\item We know that $\G=(G\times G,G)$ is a group-groupoid for a group $G$. In this case $\W(\G)\cong \Aut(G)\times\Aut(G)$ since for each pair $(f,g)$ of automorphisms of $\G$ there exist exactly one regular natural transformation form $f$ to $g$. Thus $\A(G)\cong(\Aut(G)\times\Aut(G),\Aut(G))$.
\item Let $G$ be a group. Then for the group-groupoid $\G=(G,G)$, $\W(\G)\cong\Aut(G)$ since $\W(\G)$ contains only $1_f$ for all $f\in\Aut(\G)$. Thus the actor group-groupoid $\A(\G)\cong(\Aut(G),\Aut(G))$. 
\end{enumerate}
\end{example}

Let $\G$ be a group-groupoid, $\A(\G)$ the actor group-groupoid of $\G$ and $(A,B,\alpha)$ the corresponding crossed module to $\G$ according to Theorem \ref{BST}. So $A=\Ker d_0$, $B=G_0$ and $\alpha=d_1|_{A}$. Here we can define two group homomorphisms
\[\begin{array}{rcccl}
\xi & : & \Ker \w{d_0}  & \rightarrow  & \RD(B,A)  \\
{} & {} & \eta & \mapsto  & \xi(\eta)=d_{\eta} \\
\end{array}\]
where $d_{\eta}(x)$ is given by $d_{\eta}(x)=\eta(x)-1_x$ for $x\in B$ and
\[\begin{array}{rcccl}
\lambda & : & \Aut(\G)  & \rightarrow  & \Aut(A,B,\alpha)  \\
{} & {} & f & \mapsto  & \lambda(f)=\left\langle f_1|_{A} , f_0 \right\rangle.  \\
\end{array}\]
It is easy to see that these morphisms are isomorphisms. Here the inverse of $\xi$ is given by $\xi^{-1}(d)=\eta_d$ where $\eta_d(x)=\eta(x)+1_x$ for all $x\in G_0$ and the inverse of $\lambda$ is given by $\lambda^{-1}(\left\langle f_A,f_B \right\rangle )=(f_A\times f_B,f_B)$ for all $\left\langle f_A,f_B \right\rangle\in \Aut(A,B,\alpha)$. 

\begin{theorem}\label{isoact}
$\left\langle \xi,\lambda \right\rangle\colon \left( \Ker \w{d_0},\Aut(\G),\w{d_1}|_{\Ker \w{d_0}}\right)  \rightarrow \left( \RD(B,A),\Aut(A,B,\alpha),\Delta \right) $ is an isomorphism of crossed modules.
\end{theorem}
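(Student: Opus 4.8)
The plan is to recognise the two crossed modules concretely and check directly that $\langle\xi,\lambda\rangle$ respects the crossed-module structure; since $\xi$ and $\lambda$ were already observed to be group isomorphisms, this will finish the proof. First I would make the source explicit: $\left(\Ker\w{d_0},\Aut(\G),\w{d_1}|_{\Ker\w{d_0}}\right)$ is exactly the crossed module $\varphi(\A(\G))$ obtained by applying the functor $\varphi$ of Theorem~\ref{BST} to the group-groupoid $\A(\G)=(\W(\G),\Aut(\G))$. In particular its action is the conjugation action of $\Aut(\G)$ on $\Ker\w{d_0}$ inside the group $(\W(\G),\circ_h)$, namely $f\cdot\eta=1_f\circ_h\eta\circ_h 1_{f^{-1}}$ (the horizontal inverse of $1_f$ being $1_{f^{-1}}$), which after unwinding horizontal composition reads $(f\cdot\eta)(x)=f_1\!\left(\eta\!\left(f_0^{-1}(x)\right)\right)$ for all $x\in G_0$. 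On the target side, $\Delta(d)=\langle\theta_d,\sigma_d\rangle$ and the action is $\langle p,q\rangle\cdot d=pdq^{-1}$.

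Next I would verify that the defining square commutes, i.e. $\lambda\!\left(\w{d_1}(\eta)\right)=\Delta(\xi(\eta))$ for $\eta\colon 1_\G\Rightarrow g$ in $\Ker\w{d_0}$, whose left-hand side is $\langle g_1|_A,g_0\rangle$. From $d_\eta(x)=\eta(x)-1_x$ and $d_0(1_x)=d_1(1_x)=x$ one gets immediately $\sigma_{d_\eta}(x)=\alpha d_\eta(x)+x=d_1(\eta(x))=g_0(x)$. For the first component I would use the naturality of $\eta$ (whose source functor is $1_\G$) together with the reduction $b\circ a=b-1_y+a$ for $a\in G_1(x,y)$, $b\in G_1(y,z)$: this gives $g_1(a)=\eta(y)-1_y+a-\eta(x)+1_{g(x)}$ for every $a\in G_1(x,y)$, and specialising to $a\in A=\Ker d_0$, where $d_0(a)=0$, $\eta(0)=0$ and $1_0=0$, collapses this to $g_1(a)=\eta(\alpha(a))-1_{\alpha(a)}+a=d_\eta\alpha(a)+a=\theta_{d_\eta}(a)$. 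Hence $\Delta(\xi(\eta))=\langle\theta_{d_\eta},\sigma_{d_\eta}\rangle=\langle g_1|_A,g_0\rangle$, as needed.

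Then I would check compatibility with the actions: $\xi(f\cdot\eta)=\lambda(f)\cdot\xi(\eta)$ for $f\in\Aut(\G)$ and $\eta\in\Ker\w{d_0}$. Using $(f\cdot\eta)(x)=f_1\!\left(\eta\!\left(f_0^{-1}(x)\right)\right)$ from the first step, the left-hand derivation sends $x$ to $f_1\!\left(\eta\!\left(f_0^{-1}(x)\right)\right)-1_x$; since $\lambda(f)=\langle f_1|_A,f_0\rangle$ and $f_1\!\left(1_{f_0^{-1}(x)}\right)=1_x$, the right-hand derivation $f_1|_A\circ d_\eta\circ f_0^{-1}$ sends $x$ to $f_1\!\left(\eta\!\left(f_0^{-1}(x)\right)\right)-f_1\!\left(1_{f_0^{-1}(x)}\right)=f_1\!\left(\eta\!\left(f_0^{-1}(x)\right)\right)-1_x$, and the two agree. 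Combining the three steps, $\langle\xi,\lambda\rangle$ is a morphism of crossed modules, and since both components are isomorphisms it is an isomorphism of crossed modules.

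I expect the main obstacle to be the second step, identifying $\theta_{d_\eta}$ with $g_1|_A$: it requires correctly extracting the formula for $g_1$ from the naturality square of $\eta$ via the $b\circ a=b-1_y+a$ reduction and then verifying that it degenerates to exactly $d_\eta\alpha(a)+a$ on the kernel. Keeping track of which inverse functor ($f_0^{-1}$ rather than $f_0$) appears in the conjugation action, together with the precise bracketing in the iterated horizontal composition defining that action, is the other place where it is easy to slip.
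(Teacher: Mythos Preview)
Your proposal is correct and follows essentially the same route as the paper: both verify $\lambda\w{d_1}|_{\Ker\w{d_0}}=\Delta\xi$ and $\xi(f\cdot\eta)=\lambda(f)\cdot\xi(\eta)$ by direct computation, then conclude using that $\xi$ and $\lambda$ are already known to be group isomorphisms. The only cosmetic difference is in identifying $\theta_{d_\eta}$ with $g_1|_A$: the paper recognises $d_\eta(d_1(a))+a=\eta(d_1(a))-1_{d_1(a)}+a$ immediately as the composite $\eta(d_1(a))\circ a$ and then applies naturality together with $\eta(0)=0$ to get $g_1(a)$ in one stroke, whereas you first extract the general formula $g_1(a)=\eta(y)-1_y+a-\eta(x)+1_{g(x)}$ and then specialise to $x=0$; both arguments are equivalent and your version is equally valid.
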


\begin{proof}
Let $\eta\in \Ker \w{d_0}$. Then there exist an automorphism $g$ in $\Aut(\G)$ such that $\eta\colon 1_{\G}\Rightarrow g$. So $\w{d_1}(\eta)=g$ and hence $\lambda(\w{d_1}(\eta))=\lambda(g)=\left\langle g_1|_{A} , g_0 \right\rangle$. 

On the other hand $\xi(\eta)=d_{\eta}$ and $\Delta(\xi(\eta))=\Delta(d_{\eta})=\left\langle \theta_{d_{\eta}}, \sigma_{d_{\eta}} \right\rangle $. Now let compute the morphisms $\theta_{d_{\eta}}$ and $\sigma_{d_{\eta}}$.

\[\begin{array}{rcl}
\theta_{d_{\eta}}(a) &=& d_{\eta}(d_1(a))+a\\ 
&=& \eta(d_1(a))-1_{d_1(a)}+a \\
&=& \eta(d_1(a))\circ a \\
&=& g_1(a)\circ \eta(d_0(a)) \\
&=& g_1(a)\circ \eta(0) \\
&=& g_1(a)
\end{array}\]
and
\[\begin{array}{rcl}
\sigma_{d_{\eta}}(x) &=& d_1(d_{\eta}(x))+x\\ 
&=& d_1(\eta(x)-1_x)+x \\
&=& d_1(\eta(x))-d_1(1_x)+x \\
&=& g_0(x)-x+x \\
&=& g_0(x).
\end{array}\]
That is $\Delta(d_{\eta})=\left\langle \theta_{d_{\eta}}, \sigma_{d_{\eta}} \right\rangle =\left\langle g_1|_{A} , g_0 \right\rangle $. So we obtain the equality $\lambda\w{d_1}|_{\Ker \w{d_0}}=\Delta\xi$.

Now we need to show that $\xi(f\cdot\eta)=\lambda(f)\cdot\xi(\eta)$. By Theorem \ref{BST} $f\cdot\eta=1_f\circ_h \eta \circ_h 1_f^{-1}=1_f\circ_h \eta \circ_h 1_{f^{-1}}$. Then $\xi(f\cdot\eta)=d_{f\cdot\eta}=d_{1_f\circ_h \eta \circ_h 1_{f^{-1}}}$ and for all $x\in B$
\[\begin{array}{rcl}
d_{\left( 1_f\circ_h \eta \circ_h 1_{f^{-1}}\right) } &=& \left( 1_f\circ_h \eta \circ_h 1_{f^{-1}}\right) (x)-1_x\\ 
&=& f_1(\eta(f_{0}^{-1}(x)))-1_x.
\end{array}\]
On the other hand $\lambda(f)\cdot\xi(\eta)=\left\langle f_1 ,f_0 \right\rangle \cdot d_{\eta} = f_1 d_{\eta} f_0^{-1}$ and for all $x\in B$
\[\begin{array}{rcl}
\left( f_1 d_{\eta} f_0^{-1} \right) (x) &=& f_1 \left( d_{\eta} \left( f_0^{-1} (x)\right) \right) \\ 
&=& f_1 \left( \eta\left( f_0^{-1} (x)\right) - 1_{f_0^{-1} (x)}\right) \\
&=& f_1 \left( \eta\left( f_0^{-1} (x)\right)\right) - f_1 \left(1_{f_0^{-1} (x)}\right) \\
&=& f_1(\eta(f_{0}^{-1}(x)))-1_x.
\end{array}\]
So $\xi(f\cdot\eta)=\lambda(f)\cdot\xi(\eta)$ and hence $\left\langle \xi,\lambda \right\rangle$ is a crossed module morphism. We know that $\xi$ and $\lambda$ are group isomorphisms. So this completes the proof.
\end{proof}

\begin{corollary}
Let $\G$ be a group-groupoid and $(A,B,\alpha)$ the corresponding crossed module to $\G$. Then $\A(A,B,\alpha)=(\RD(B,A),\Aut(A,B,\alpha),\Delta)$ is isomorphic to the corresponding crossed module to $\A(\G)$.
\end{corollary}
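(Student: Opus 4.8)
The plan is to deduce the statement directly from Theorem~\ref{BST} and Theorem~\ref{isoact}, the substantive work having already been carried out there. First I would apply the functor $\varphi\colon\GpGd\to\XMod$ constructed in the proof of Theorem~\ref{BST} to the actor group-groupoid $\A(\G)=(\W(\G),\Aut(\G))$. By the explicit recipe for $\varphi$, the crossed module associated with $\A(\G)$ has first term the kernel of the source map, that is $\Ker\w{d_0}$; second term the group of objects, that is $\Aut(\G)$; boundary homomorphism the restriction of the target map, that is $\w{d_1}|_{\Ker\w{d_0}}$; and action of $\Aut(\G)$ on $\Ker\w{d_0}$ given by conjugation in the ambient group-groupoid, namely $f\cdot\eta=\w\varepsilon(f)\circ_h\eta\circ_h\w\varepsilon(f)^{-1}$. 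Using $\w\varepsilon(f)=1_f$ and that the inverse of $1_f$ in $(\W(\G),\circ_h)$ is $1_{f^{-1}}$, this action is precisely $1_f\circ_h\eta\circ_h 1_{f^{-1}}$, so the crossed module corresponding to $\A(\G)$ is exactly $\left(\Ker\w{d_0},\Aut(\G),\w{d_1}|_{\Ker\w{d_0}}\right)$ equipped with the action appearing in the statement of Theorem~\ref{isoact}.

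Second, I would invoke Theorem~\ref{isoact}, which supplies the isomorphism of crossed modules
\[\left\langle\xi,\lambda\right\rangle\colon\left(\Ker\w{d_0},\Aut(\G),\w{d_1}|_{\Ker\w{d_0}}\right)\longrightarrow\left(\RD(B,A),\Aut(A,B,\alpha),\Delta\right)=\A(A,B,\alpha),\]
and conclude that the crossed module corresponding to $\A(\G)$ is isomorphic to $\A(A,B,\alpha)$, which is the assertion.

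The only subtlety, which I would make explicit rather than grind through, is the identification of structures in $\A(\G)$ so that the recipe for $\varphi$ applies verbatim: one must note that the group operation on $\W(\G)$ (and on $\Aut(\G)$) is horizontal composition (respectively, composition of functors), while the partial composition $\w m$ is vertical composition, hence $\w\varepsilon(f)=1_f$, the maps $\w{d_0},\w{d_1}$ send $\eta$ to its source and target functors, and the conjugation action prescribed by $\varphi$ unwinds to $1_f\circ_h\eta\circ_h 1_{f^{-1}}$. Once this matching of structures is in place there is no further obstacle, and the corollary follows at once from Theorems~\ref{BST} and~\ref{isoact}.
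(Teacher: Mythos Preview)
Your proposal is correct and matches the paper's intended argument: the corollary is stated immediately after Theorem~\ref{isoact} without a separate proof, precisely because it follows by applying the functor $\varphi$ of Theorem~\ref{BST} to $\A(\G)$ to obtain $\left(\Ker\w{d_0},\Aut(\G),\w{d_1}|_{\Ker\w{d_0}}\right)$ and then invoking the isomorphism $\langle\xi,\lambda\rangle$ of Theorem~\ref{isoact}. Your explicit unpacking of the action as $1_f\circ_h\eta\circ_h 1_{f^{-1}}$ is exactly the identification used in the proof of Theorem~\ref{isoact}, so nothing further is needed.
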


Theorem \ref{isoact} states that the definition of actor group-groupoid is compatible with the one given in \cite{Norrie1990} by Norrie for crossed modules.

\subsection{Center of a group-groupoid}

Now let us consider the group-groupoid automorphisms $f^{x}:=(f_{1}^{x},f_{0}^{x})$ of $\G$ given by
\[\begin{array}{rcccl}
f_{1}^{x} & : & G_1 & \rightarrow  & G_1  \\
{} & {} & b & \mapsto  & f_{1}^{x}(b)=1_x+b-1_x \\
\end{array}\]
on the group of objects and by
\[\begin{array}{rcccl}
f_{0}^{x} & : & G_0 & \rightarrow  & G_0  \\
{} & {} & z & \mapsto  & f_{0}^{x}(z)=x+z-x \\
\end{array}\]
on the group of morphisms for all $x\in G_0$. Using these automorphisms we can define a canonical group-groupoid morphism $\varphi=(\varphi_1,\varphi_0)\colon \G\rightarrow \A(\G)$ as follows: Let $a\colon x\rightarrow y$ be a morphism in $G_1$ and $z$ an object in $G_0$. Then $\varphi_1(a)=\eta^a\colon f^{x}\Rightarrow f^{y}$ where $\eta^a(z)=a+1_z-a$ and $\varphi_0(z)=f^z$. It is easy to see that $\eta^a\in \W(\G)$ for all $a\in G_1$ and $\varphi$ is a group-groupoid morphism. Here the automorphism $F^{\eta^{a}}$ introduced in Lemma \ref{auteta} corresponding to $\eta^{a}$ is given by $F^{\eta^{a}}(b)=a+b-a$, i.e. the conjugation. 

Note that for all $x,y\in G_0$, $f^x f^y=f^{x+y}$ and for all $a,b\in G_1$, $\eta^{a}\circ_h\eta^{b}=\eta^{a+b}$. Hence the inverse $(f^x)^{-1}$ of $f^x$ is equal to $f^{-x}$ and the horizontal inverse $\left( \eta^{a}\right)^{-h}$ of $\eta^{a}$ is equal to $\eta^{-a}$.

We define the centre of a group-groupoid $\G$ to be the kernel $Z(\G)$ of $\varphi$. Thus \[\left( Z\left(\G \right)  \right)_1 := \Ker \varphi_1=\{a\in G_1 ~|~ a+1_x=1_x+a \text{ for all } x\in G_0 \}\] and 
\[\left( Z\left(\G \right)  \right)_0 := \Ker \varphi_0=\{x\in G_0 ~|~ a+1_x=1_x+a \text{ for all } a\in G_1\}.\] 

Here note that $\left( Z\left(\G \right)  \right)_0$ is a subgroup of the centre $Z(G_0)$ of $G_0$.

Since $Z(\G)$ is the kernel of the group-groupoid morphism $\varphi$ then we can give the following Lemma.

\begin{lemma}
Let $\G$ be a group-groupoid. Then $Z(\G)$ is a normal subgroup-groupoid of $\G$.
\end{lemma}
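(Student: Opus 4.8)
The plan is to reduce everything to the general fact, recorded in the preliminaries just before Definition~\ref{nsgpgd}, that the kernel $\Ker f=(\Ker f_1,\Ker f_0)$ of any morphism $f=(f_1,f_0)$ of group-groupoids is a normal subgroup-groupoid of the domain. By construction $Z(\G)=\Ker\varphi$ for the canonical morphism $\varphi=(\varphi_1,\varphi_0)\colon\G\to\A(\G)$ built above, so it suffices to observe that $\varphi$ really is a morphism of group-groupoids and then quote that fact.

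In more detail I would argue as follows. First, $\varphi_1\colon G_1\to\W(\G)$ and $\varphi_0\colon G_0\to\Aut(\G)$ are group homomorphisms: this is exactly the assertion (already made when $\varphi$ was introduced) that $\varphi$ is a group-groupoid morphism, and it follows from the identities $\eta^{a}\circ_h\eta^{b}=\eta^{a+b}$ and $f^{x}f^{y}=f^{x+y}$ together with $\eta^{0}=1_{1_{\G}}$ and $f^{0}=1_{\G}$. Consequently $\Ker\varphi_1$ is a normal subgroup of $G_1$ and $\Ker\varphi_0$ is a normal subgroup of $G_0$. Second, $(\Ker\varphi_1,\Ker\varphi_0)$ is a subgroupoid of $\G$: since $\varphi$ is a functor it commutes with $d_0,d_1,\varepsilon$ and $m$, so the source and target of any $a\in\Ker\varphi_1$ lie in $\Ker\varphi_0$, the identity $1_x$ lies in $\Ker\varphi_1$ whenever $x\in\Ker\varphi_0$, the composite of two composable elements of $\Ker\varphi_1$ again lies in $\Ker\varphi_1$, and (using $a^{-1}=1_{d_0(a)}-a+1_{d_1(a)}$) the inverse of an element of $\Ker\varphi_1$ is again in $\Ker\varphi_1$. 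Hence $Z(\G)\leq\G$ is a subgroup-groupoid whose morphism part $\Ker\varphi_1$ is normal in $G_1$, which is precisely what Definition~\ref{nsgpgd} requires. Therefore $Z(\G)\lhd\G$.

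Essentially nothing here is a genuine obstacle: the argument is purely formal once $\varphi$ is known to be a group-groupoid morphism. The only mildly technical point — and it has already been disposed of in the paragraph defining $\varphi$ — is checking that $\varphi$ is well defined, i.e. that each $f^{x}=(f_1^{x},f_0^{x})$ is an automorphism of $\G$, that each $\eta^{a}$ is a natural transformation $f^{x}\Rightarrow f^{y}$, and that $\eta^{a}$ is regular, its horizontal inverse being $\eta^{-a}$. Granting that, the statement is immediate from the kernel fact cited above, so the proof should be only a few lines.
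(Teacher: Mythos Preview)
Your proposal is correct and follows exactly the paper's approach: the paper's entire argument is the single sentence preceding the lemma, ``Since $Z(\G)$ is the kernel of the group-groupoid morphism $\varphi$ then we can give the following Lemma,'' invoking the fact (stated just before Definition~\ref{nsgpgd}) that kernels of group-groupoid morphisms are normal subgroup-groupoids. Your write-up simply unpacks this in more detail than the paper does.
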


Definition of a centre for a group-groupoid is consistent with the categorical one given by Huq \cite{Huq1968} since the corresponding crossed module to $Z(\G)$ is isomorphic to $\xi(A,B,\alpha)$ which is defined by Norrie in \cite{Norrie1990} while $(A,B,\alpha)$ is the corresponding crossed module to $\G$.

\begin{lemma}\label{lemcom}
Let $\G$ be a group-groupoid. Then the elements of $G_1$ and of $\left( Z(\G)\right)_1$ are commute.
\end{lemma}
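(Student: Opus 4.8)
The plan is to show that any $a\in G_1$ and any $c\in\left(Z(\G)\right)_1$ commute, i.e. $a+c=c+a$. First I would unpack what membership in $\left(Z(\G)\right)_1$ gives us: by definition $c\in\left(Z(\G)\right)_1=\Ker\varphi_1$ means $\eta^c=1_{1_{\G}}$, equivalently $\varphi_1(c)$ is the trivial natural transformation, which forces $d_0(c)=d_1(c)=0$ (so that $f^{d_0(c)}=f^{d_1(c)}=1_{\G}$) and $c+1_z=1_z+c$ for all $z\in G_0$; in particular $c\in\Ker d_0\cap\Ker d_1$.

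Next I would exploit the description of composition in terms of the group operation recalled in the Preliminaries: for $c\in\Ker d_0\cap\Ker d_1$ and any $a\in G_1(x,y)$, one has the identity relating $\circ$ and $+$. The cleanest route is to use the fact, stated in the excerpt, that elements of $\Ker d_0$ and of $\Ker d_1$ commute, together with the conjugation formula $F^{\eta^{a}}(b)=a+b-a$ from the paragraph preceding the lemma: since $c\in\left(Z(\G)\right)_1$ we want $a+c-a=c$. Writing $a = 1_x + (a - 1_x)$ with $a-1_x\in\Ker d_1$ and using $c\in\Ker d_0$ together with $c+1_x=1_x+c$, the conjugation $a+c-a$ collapses: $a+c-a = 1_x+(a-1_x)+c-(a-1_x)-1_x$, and because $c$ commutes with $1_x$ and $a-1_x\in\Ker d_1$ commutes with $c\in\Ker d_0$, this reduces to $1_x+c-1_x = c$. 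Hence $a+c = c+a$.

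The main obstacle — really the only subtle point — is justifying that $a-1_x\in\Ker d_1$ genuinely commutes with $c$, which requires $c\in\Ker d_0$; and dually one must be careful that $c$ commutes with the identities $1_x$, which is exactly the defining property of $\left(Z(\G)\right)_1$. So the argument must first extract both pieces of information (annihilation by $d_0,d_1$, and commutation with all identities) from $c\in\Ker\varphi_1$ before the commutator computation goes through. Once these are in hand the computation is a short rearrangement using the interchange law and the commutation of $\Ker d_0$ with $\Ker d_1$, with no further difficulty. I would close by remarking that this immediately gives $\left(Z(\G)\right)_1\subseteq Z(G_1)$, consistent with the earlier observation that $\left(Z(\G)\right)_0\subseteq Z(G_0)$.
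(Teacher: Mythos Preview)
There is a genuine gap. Your key claim that $c\in\Ker\varphi_1$ forces $d_0(c)=d_1(c)=0$ is false. The condition $\eta^{c}=1_{1_{\G}}$ only says that the source and target automorphisms $f^{d_0(c)}$ and $f^{d_1(c)}$ equal $1_{\G}$, i.e.\ that $d_0(c),d_1(c)\in\Ker\varphi_0=(Z(\G))_0$; it does \emph{not} force them to be $0$. For a concrete counterexample take $\G=(G\times G,G)$ with $G$ abelian: then $Z(\G)=\G$, so every $c\in G_1$ lies in $(Z(\G))_1$, yet almost none satisfy $d_0(c)=d_1(c)=0$. Consequently $c\notin\Ker d_0\cap\Ker d_1$ in general, and your commutation step ``$a-1_x$ commutes with $c$ because one lies in $\Ker d_1$ and the other in $\Ker d_0$'' is not available. (Incidentally, for $a\in G_1(x,y)$ one has $a-1_x\in\Ker d_0$, not $\Ker d_1$, so even the labelling there needs adjusting.)

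The repair is exactly the paper's move: shift \emph{both} elements into the appropriate kernels rather than only the arbitrary one. With $c\in(Z(\G))_1$ and $b\in G_1$, set $c-1_{d_0(c)}\in\Ker d_0$ and $b-1_{d_1(b)}\in\Ker d_1$; these commute by the interchange law. Expanding and then using the two pieces of information you correctly identified --- that $c$ commutes with every $1_z$ (definition of $(Z(\G))_1$) and that $1_{d_0(c)}$ commutes with every morphism (since $d_0(c)\in(Z(\G))_0$) --- lets you cancel the identity terms on both sides and conclude $c+b=b+c$. So your outline becomes correct once you replace the false assertion $c\in\Ker d_0\cap\Ker d_1$ by the weaker, true statement $d_0(c),d_1(c)\in(Z(\G))_0$ and decompose $c$ accordingly.
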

\begin{proof}
Let $a\in \left( Z(\G)\right)_1$ and $b\in G_1$. Then $a-1_{d_0(a)}\in\Ker d_0$ and $b-1_{d_1(b)}\in\Ker d_1$ so  
\[(a-1_{d_0(a)})+(b-1_{d_1(b)})=(b-1_{d_1(b)})+(a-1_{d_0(a)}).\]
Since $d_0(a)\in \left( Z(\G)\right)_0 $ then
\[\begin{array}{rcl}
(a-1_{d_0(a)})+(b-1_{d_1(b)}) &=&(b-1_{d_1(b)})+(a-1_{d_0(a)}) \\
(a+b)+(-1_{d_0(a)}-1_{d_1(b)}) &=&(b+a)+(-1_{d_1(b)}-1_{d_0(a)}) \\
a+b &=&b+a
\end{array}\]
This completes the proof.
\end{proof}

Analogous to the group case we can introduce the notion of abelian group-groupoid using centres.

\begin{definition}\label{abgpgd}
A group-groupoid $\G$ is called \textbf{abelian} if it coincide with its centre, i.e. $\G=Z(\G)$.
\end{definition}

Following is a consequence of Lemma \ref{lemcom} and Definition \ref{abgpgd}.

\begin{corollary}\label{lemab}
A group-groupoid $\G$ is abelian if and only if $G_1$ (hence also $G_0$) is an abelian group.
\end{corollary}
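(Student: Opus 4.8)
The plan is to prove both implications directly, translating the condition ``$\G$ is abelian'' (i.e.\ $\G = Z(\G)$) into statements about the groups $G_1$ and $G_0$, and using the structural identities for group-groupoids recalled in the Preliminaries.

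First I would prove the forward direction. Suppose $\G = Z(\G)$, so $G_1 = (Z(\G))_1 = \Ker \varphi_1$ and $G_0 = (Z(\G))_0 = \Ker\varphi_0$. By definition of $(Z(\G))_0$, every $x \in G_0$ satisfies $a + 1_x = 1_x + a$ for all $a \in G_1$. In particular, taking $a = 1_y$ for arbitrary $y \in G_0$ and applying $d_0$ (a group homomorphism), we get $y + x = x + y$, so $G_0$ is abelian. For $G_1$: take arbitrary $a, b \in G_1$. Since $G_1 = (Z(\G))_1$, Lemma~\ref{lemcom} applies with both elements lying in $(Z(\G))_1$ (indeed it applies as soon as one of them does), giving $a + b = b + a$. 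Hence $G_1$ is abelian. Alternatively, and perhaps more cleanly, one observes that $a \in (Z(\G))_1$ means $a$ commutes with every $1_x$; combined with the fact (recalled from \cite{Brown1976}) that any $b \in G_1$ can be written as $b = (b - 1_{d_1(b)}) + 1_{d_1(b)}$ with $b - 1_{d_1(b)} \in \Ker d_1$, and that elements of $\Ker d_0$ and $\Ker d_1$ commute, one can push any element of $G_1$ through. I would present whichever of these is shortest; the Lemma~\ref{lemcom} route is the most economical since that Lemma is already available.

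For the converse, suppose $G_1$ is an abelian group. Then trivially $a + 1_x = 1_x + a$ for all $a \in G_1$ and all $x \in G_0$, so every $a \in G_1$ lies in $\Ker\varphi_1 = (Z(\G))_1$ and every $x \in G_0$ lies in $\Ker\varphi_0 = (Z(\G))_0$; thus $\G = Z(\G)$, i.e.\ $\G$ is abelian. It remains to note the parenthetical claim that $G_1$ abelian forces $G_0$ abelian: since $\varepsilon\colon G_0 \to G_1$ is an injective group homomorphism (it has the retraction $d_0$), $G_0$ embeds in the abelian group $G_1$ and is therefore abelian. This also shows the two conditions ``$G_1$ abelian'' and ``$G_1$ and $G_0$ abelian'' coincide, which is why the statement puts ``hence also $G_0$'' in parentheses.

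I do not expect any serious obstacle here: the proof is essentially unwinding the definition of the centre $Z(\G)$ as $\Ker\varphi$ together with the explicit description of $(Z(\G))_1$ and $(Z(\G))_0$ given just before the statement, plus the already-proved Lemma~\ref{lemcom} and the basic fact that $\varepsilon$ is a monomorphism splitting $d_0$. The only point requiring a moment's care is making sure that ``$G_1$ abelian'' genuinely implies the full centrality condition defining $(Z(\G))_0$ (commutation with \emph{all} of $G_1$, not just with identities), but since $1_x \in G_1$ this is immediate. The write-up will be a short two-paragraph argument, one paragraph per implication.
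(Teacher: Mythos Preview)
Your proposal is correct and follows essentially the same approach as the paper, which states this corollary as an immediate consequence of Lemma~\ref{lemcom} and Definition~\ref{abgpgd} without spelling out the argument; you have simply filled in the obvious details (including the parenthetical remark about $G_0$ via the embedding $\varepsilon$).
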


By Lemma \ref{lemcom} and Corollary \ref{lemab} we can give the following corollary.

\begin{corollary}
A group-groupoid $\G$ is abelian if and only if $a+1_x=1_x+a$ for all $a\in G_1$ and $x\in G_0$.
\end{corollary}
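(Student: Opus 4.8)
The plan is to chain together the three results that immediately precede this corollary. By Corollary~\ref{lemab}, $\G$ is abelian if and only if $G_1$ is an abelian group, so it suffices to show that the condition ``$a+1_x=1_x+a$ for all $a\in G_1$ and $x\in G_0$'' is equivalent to $G_1$ being abelian. One direction is trivial: if $G_1$ is abelian then in particular every $a\in G_1$ commutes with every identity $1_x$, since $1_x\in G_1$. So the real content is the converse, and that is where I expect the single (mild) obstacle to lie.

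For the converse, suppose $a+1_x=1_x+a$ for all $a\in G_1$ and all $x\in G_0$. I would first observe that this says precisely that $G_1=\bigl(Z(\G)\bigr)_1$: indeed $\bigl(Z(\G)\bigr)_1=\{a\in G_1\mid a+1_x=1_x+a\text{ for all }x\in G_0\}$ by the definition of the centre given just above Lemma~\ref{lemcom}, and the hypothesis asserts exactly that this set is all of $G_1$. Hence $\G=Z(\G)$, i.e. $\G$ is abelian by Definition~\ref{abgpgd}, and then by Corollary~\ref{lemab} $G_1$ (and $G_0$) is abelian. Actually the cleanest route is even shorter: the hypothesis makes every element of $G_1$ lie in $\bigl(Z(\G)\bigr)_1$, so by Lemma~\ref{lemcom} any two elements $a,b\in G_1$ commute (take $a\in\bigl(Z(\G)\bigr)_1=G_1$ and $b\in G_1$ in that lemma), whence $G_1$ is abelian and $\G$ is abelian by Corollary~\ref{lemab}.

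The only point requiring a moment's care is the innocuous-looking step that $1_x\in G_1$ for every $x\in G_0$, which is what lets the forward direction go through; this is immediate since $\varepsilon\colon G_0\to G_1$ sends $x$ to $1_x$. With that in hand the proof is a two-line bookkeeping argument, so I would write it compactly: ``If $\G$ is abelian then $G_1$ is abelian by Corollary~\ref{lemab}, so in particular $a+1_x=1_x+a$ for all $a\in G_1$, $x\in G_0$. Conversely, this condition says $G_1=\bigl(Z(\G)\bigr)_1$, so by Lemma~\ref{lemcom} the group $G_1$ is abelian, and $\G$ is abelian by Corollary~\ref{lemab}.'' I do not anticipate any genuine difficulty; the corollary is essentially a restatement packaging the earlier lemmas.
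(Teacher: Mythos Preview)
Your proposal is correct and matches the paper's approach: the paper gives no explicit proof but simply says the corollary follows from Lemma~\ref{lemcom} and Corollary~\ref{lemab}, which is exactly your ``cleanest route.'' One small remark: your first route, where you conclude $\G=Z(\G)$ from $G_1=\bigl(Z(\G)\bigr)_1$, tacitly also needs $G_0=\bigl(Z(\G)\bigr)_0$, but the hypothesis gives this symmetrically (fix $x$ and quantify over $a$), so in fact the corollary already follows from Definition~\ref{abgpgd} alone, without invoking Lemma~\ref{lemcom} or Corollary~\ref{lemab} at all.
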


\begin{corollary}
Let $\G$ be a group-groupoid. Then $\G/\G'$ is an abelian group-groupoid where $\G'$ is the commutator subgroup-groupoid of $\G$.
\end{corollary}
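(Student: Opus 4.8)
\section*{Proof proposal}

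The plan is to reduce the statement to Corollary~\ref{lemab}, which characterizes abelian group-groupoids by the commutativity of their group of morphisms. First I would recall that, by the discussion preceding this corollary, $\G'=[\G,\G]=\bigl([G_1,G_1],[G_0,G_0]\bigr)$ is a normal subgroup-groupoid of $\G$, since $[G_1,G_1]$ is a normal subgroup of $G_1$; hence Definition of the quotient group-groupoid applies and $\G/\G'$ is a genuine group-groupoid, with group of morphisms $G_1/[G_1,G_1]$ sitting over the group of objects $G_0/[G_0,G_0]$.

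Next I would observe that $G_1/[G_1,G_1]$ is precisely the abelianization of the group $G_1$, and therefore is an abelian group; likewise $G_0/[G_0,G_0]$ is abelian (and is in fact the group of objects of $\G/\G'$, consistently with the quotient structure). Thus the group of morphisms of the group-groupoid $\G/\G'$ is abelian.

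Finally I would invoke Corollary~\ref{lemab}: a group-groupoid is abelian exactly when its group of morphisms (equivalently its group of objects) is an abelian group. Applying this to $\G/\G'$, whose group of morphisms $G_1/[G_1,G_1]$ we have just seen to be abelian, yields that $\G/\G'$ is an abelian group-groupoid, which is what we wanted.

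I do not expect any genuine obstacle here; the only point requiring a moment's care is making sure the quotient group-groupoid $\G/\G'$ really does have $G_1/[G_1,G_1]$ as its group of morphisms (so that "abelianization" is literally what appears), which follows immediately from the definition of the quotient group-groupoid together with the normality of $\G'$ recorded earlier in the text.
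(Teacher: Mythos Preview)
Your proposal is correct and is exactly the argument the paper has in mind: the corollary is stated without proof, immediately after Corollary~\ref{lemab} and the observation that $\G'=([G_1,G_1],[G_0,G_0])$ is normal, so the intended justification is precisely to note that $(\G/\G')_1=G_1/[G_1,G_1]$ is abelian and then invoke Corollary~\ref{lemab}. There is nothing to add.
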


The quotient group-groupoid given above is called the \textbf{abelianization} of $\G$ and denoted by $\G^{\text{ab}}=\G/\G'$. 

\begin{example}\begin{enumerate}[label={\textbf{(\roman{*})}}]
	\item Let $X$ be an abelian topological group. Then $(\pi X,X)$ is an abelian group-groupoid.		
	\item $\G=(G\times G,G)$ is an abelian group-groupoid if and only if $G$ is an abelian group.		
	\item If $G$ is an abelian group then the group-groupoid $\G=(G,G)$ is abelian. 
	\end{enumerate}
\end{example}

Abelian group-groupoids forms a full subcategory of $\GpGd$ and this subcategory is denoted by $\AGpGd$.

The \textbf{inner actor} $\I(\G)$ of $\G$ is the image $\Im \varphi$ of the group-groupoid morphism $\varphi\colon \G\rightarrow \A(\G)$. One can see that $\I(\G)$ is a normal subgroup-groupoid of $\A(\G)$. Here, by the first isomorphism theorem for group-groupoids \cite{Mucuk2015a}, $\G/Z(\G)\cong \I(\G)$. Also the quotient group-groupoid $\A(\G)/\I(\G)$ is called the \textbf{outer actor} of $\G$ and denoted by $\mathcal{O}(\G)$. Hence we obtain the short exact sequence
\[\xymatrix@R=13mm@C=13mm{
	0 \ar[r] & Z(\G) \ar[r]^-{\ker \varphi}	& \G \ar[r]^-{\varphi} & \A(\G) \ar[r]^-{\coker \varphi} & \mathcal{O}(\G) \ar[r] & 0
}\]
of group-groupoids since $\mathcal{O}(\G)$ is the cokernel of $\varphi$.

By the definition of centre of a group-groupoid $\G$, if $Z(\G)=\bm 0$, i.e. the group-groupoid with one morphism, then $\varphi\colon \G\rightarrow\A(\G)$ is a monomorphism of group-groupoids. In this case $\G\cong\I(\G)$ and can be consider as a normal subgroup-groupoid of $\A(\G)$. 

\begin{proposition}\label{tricen}
Let $\G$ be a group-groupoid. If $Z(\G)$ is trivial group-groupoid then so is $Z(\A(\G))$.
\end{proposition}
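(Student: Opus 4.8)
The plan is to transport the classical fact \emph{``a group with trivial centre has automorphism group with trivial centre''} through the embedding $\varphi\colon\G\to\A(\G)$, after first reducing the statement to a purely group-theoretic claim about $\W(\G)$.

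First I would record a reduction valid for any group-groupoid $\H$: Lemma~\ref{lemcom} applied to $\H$ gives $(Z(\H))_1\subseteq Z(H_1)$, the reverse inclusion being immediate, so $(Z(\H))_1=Z(H_1)$; and $(Z(\H))_0=\{x\in H_0:1_x\in Z(H_1)\}$, which collapses to $\{0\}$ once $Z(H_1)=0$ (because $1_x=0$ forces $x=d_0(1_x)=d_0(0)=0$). Hence $\H$ has trivial centre \emph{if and only if} $Z(H_1)=0$. Applying this to $\H=\G$ we may assume $Z(G_1)=0$, and applying it to $\H=\A(\G)$ (a group-groupoid with $(\A(\G))_1=\W(\G)$ under the group operation $\circ_h$), it suffices to show
\[Z(\W(\G),\circ_h)=\{1_{1_\G}\}.\]

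Next I would pin down the ``inner'' subgroup of $\W(\G)$. Since $Z(\G)=\Ker\varphi$ is trivial, $\varphi$ is a monomorphism, so $\varphi_1\colon G_1\to\W(\G),\ a\mapsto\eta^a$, is an injective group homomorphism (recall $\eta^a\circ_h\eta^b=\eta^{a+b}$), with image $M:=\{\eta^a:a\in G_1\}\cong G_1$. The key computation is the analogue of $\phi\iota_a\phi^{-1}=\iota_{\phi(a)}$: for all $\tau\in\W(\G)$ and $a\in G_1$,
\[\tau\circ_h\eta^a\circ_h\tau^{-h}=\eta^{F^\tau(a)}.\]
To get this I apply the group homomorphism $\tau\mapsto F^\tau$ from $(\W(\G),\circ_h)$ to $(\Aut G_1,\circ)$ furnished by Corollary~\ref{lemhorcomF} (with $F^{1_{1_\G}}=1_{G_1}$ from Remark~\ref{remF}, hence $F^{\tau^{-h}}=(F^\tau)^{-1}$): then $F^{\tau\circ_h\eta^a\circ_h\tau^{-h}}=F^\tau\circ F^{\eta^a}\circ(F^\tau)^{-1}$, and since $F^{\eta^a}$ is conjugation by $a$ in $G_1$ this is conjugation by $F^\tau(a)$, i.e. $F^{\eta^{F^\tau(a)}}$; now the injectivity of $\tau\mapsto F^\tau$ (Lemma~\ref{mono}) yields the displayed equation. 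In particular this re-proves that $M\lhd\W(\G)$.

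Finally I would compute the centraliser of $M$ in $(\W(\G),\circ_h)$. If $\tau$ centralises $M$, then $\eta^{F^\tau(a)}=\eta^a$ for every $a\in G_1$; injectivity of $\varphi_1$ forces $F^\tau(a)=a$ for all $a$, i.e. $F^\tau=1_{G_1}$, whence $\tau=1_{1_\G}$ by Lemma~\ref{mono}. So the centraliser of $M$ in $\W(\G)$ is trivial, and since the centre of a group is contained in the centraliser of every subgroup, $Z(\W(\G),\circ_h)\subseteq\{1_{1_\G}\}$; by the reduction of the first paragraph, $Z(\A(\G))$ is the trivial group-groupoid. I expect the only delicate point to be the conjugation formula in the second display — it rests entirely on $F^{\eta^a}$ being literally conjugation by $a$ together with the injectivity of $\tau\mapsto F^\tau$ — after which the argument is just the standard ``$Z(G)=1\Rightarrow Z(\Aut G)=1$'' reasoning read inside $\W(\G)$.
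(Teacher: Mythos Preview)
Your proof is correct and follows essentially the same route as the paper: both arguments show that any $\tau$ central in $(\W(\G),\circ_h)$ must $\circ_h$-commute with every inner $\eta^a$, push this through the injective homomorphism $\tau\mapsto F^\tau$ of Corollary~\ref{lemhorcomF} and Lemma~\ref{mono}, use that $F^{\eta^a}$ is conjugation by $a$, and invoke $Z(\G)=\bm 0$ to force $F^\tau=1_{G_1}$, hence $\tau=1_{1_\G}$. The only difference is packaging: you first isolate the conjugation identity $\tau\circ_h\eta^a\circ_h\tau^{-h}=\eta^{F^\tau(a)}$ and then conclude via injectivity of $a\mapsto\eta^a$, whereas the paper performs the equivalent computation elementwise, arriving at $-F^{\eta}(a)+a\in(Z(\G))_1$ directly; your preliminary reduction $(Z(\H))_1=Z(H_1)$ also makes explicit the passage to the object part that the paper leaves implicit.
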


\begin{proof}
Let $\eta\in (Z(\A(\G)))_1$. Then $\eta\circ_h\eta' = \eta'\circ_h\eta$ for all $\eta'\in \W(\G)$ by Lemma \ref{lemcom}. In particular $\eta\circ_h\eta^{a} = \eta^{a}\circ_h\eta$ for any $a\in G_1$. Then for all $b\in G_1$ 
\[\begin{array}{rcl}
F^{\left( \eta\circ_h\eta^{a}\right) }(b) &=& F^{\left( \eta^{a}\circ_h\eta\right) }(b) \\
F^{\eta}(F^{\eta^{a}}(b)) &=& F^{\eta^{a}}(F^{\eta}(b)) \\
F^{\eta}(a+b-a) &=& a+(F^{\eta}(b))-a \\
F^{\eta}(a)+F^{\eta}(b)-F^{\eta}(a) &=& a+(F^{\eta}(b))-a
\end{array}\]
and hence \[F^{\eta}(b)+(-F^{\eta}(a)+a) = (-F^{\eta}(a)+a)+(F^{\eta}(b)).\]
Because of the fact that $F^{\eta}$ is an isomorphism $(-F^{\eta}(a)+a)\in (Z(\G))_1$. Since $Z(\G)$ is trivial then $F^{\eta}(a)=a$ for all $a\in G_1$. 

On the other hand we already know that $F^{\left( 1_{1_{\G}}\right) }(a)=a$ for all $a\in G_1$. So $F^{\eta}=F^{\left( 1_{1_{\G}} \right) }$. Then by Lemma \ref{mono}, $\eta=1_{1_{\G}}$. Hence $Z(\A(\G))$ is also trivial.
\end{proof}

By Proposition \ref{tricen} for a given group-groupoid $\G$ with $Z(\G)=\bm 0$, we can construct a sequence of group-groupoids
\[\xymatrix{
	\G \ar[r] & \A(\G) \ar[r] & \A(\A(\G))=\A^2(\G) \ar[r] & \A(\A(\A(\G)))=\A^3(\G) \ar[r] & \cdots 
}\]
in which each group-groupoid embeds as a normal subgroup-groupoid in its successor. This sequence is called the \textbf{actor tower} of $\G$. 

\begin{definition}
A group-groupoid is called \textbf{complete} if $Z(\G)=\bm 0$ and the canonical morphism $\varphi\colon\G\rightarrow\A(\G)$ is an isomorphism.
\end{definition}

\begin{corollary}
If $\G$ is a complete group-groupoid then $\G\cong\I(\G)=\A(\G)$.
\end{corollary}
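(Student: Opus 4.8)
The plan is to simply unwind the definition of completeness together with the two structural facts about the canonical morphism $\varphi\colon\G\to\A(\G)$ already recorded in the excerpt. First I would recall that, by definition, $\G$ being complete means two things hold simultaneously: $Z(\G)=\bm 0$ and $\varphi$ is an isomorphism of group-groupoids. These are exactly the hypotheses to be exploited, one for each of the two assertions $\G\cong\I(\G)$ and $\I(\G)=\A(\G)$.

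Next I would use $Z(\G)=\bm 0$. Since $Z(\G)=\Ker\varphi$ by the definition of the centre of a group-groupoid, the kernel of $\varphi$ is trivial, so $\varphi$ is a monomorphism; applying the first isomorphism theorem for group-groupoids (quoted from \cite{Mucuk2015a} in the paragraph introducing $\I(\G)$), we obtain $\G/Z(\G)\cong\I(\G)$, and because $Z(\G)=\bm 0$ this collapses to $\G\cong\I(\G)$. Then I would use that $\varphi$ is an isomorphism, hence in particular an epimorphism, so $\I(\G)=\Im\varphi=\A(\G)$, the image of a surjection being the whole target. Chaining the two conclusions gives $\G\cong\I(\G)=\A(\G)$, which is the claim.

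There is essentially no obstacle here; the proof is a two-line consequence of the preceding results. The only point I would be careful about in the write-up is the bookkeeping between ``$=$'' and ``$\cong$'': once $\varphi$ is surjective, $\I(\G)=\Im\varphi$ is literally equal to $\A(\G)$ as a subgroup-groupoid of $\A(\G)$, whereas $\G$ is only isomorphic to that common object, the isomorphism being $\varphi$ itself. I would phrase the argument so that it uses precisely injectivity of $\varphi$ (via $\Ker\varphi=Z(\G)=\bm 0$) for the first half and surjectivity of $\varphi$ for the second, and claims nothing stronger.
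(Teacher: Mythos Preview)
Your proposal is correct and matches the paper's treatment: the paper states this as an immediate corollary of the definition of completeness, with no separate proof given. Your unpacking---using $Z(\G)=\Ker\varphi=\bm 0$ together with $\G/Z(\G)\cong\I(\G)$ for the first isomorphism, and surjectivity of $\varphi$ for the equality $\I(\G)=\Im\varphi=\A(\G)$---is exactly the intended two-line argument, and your care in distinguishing ``$\cong$'' from ``$=$'' is well placed.
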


We know that if $Z(\G)=\bm 0$ then $\varphi\colon\G\rightarrow\A(\G)$ is a monomorphism. So a group-groupoid is complete if and only if $Z(\G)=\bm 0$ and the canonical morphism $\varphi\colon\G\rightarrow\A(\G)$ is an epimorphism.

\begin{lemma}
Let $\G$ be a 1-transitive group-groupoid, i.e. $G_1(x,y)$ is singleton for all $x,y\in G_0$. Then $\G$ is complete if and only if $G_1$ is a complete group.
\end{lemma}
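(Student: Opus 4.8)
The plan is to exploit the structure of a $1$-transitive group-groupoid $\G$: fixing the identity element $0 \in G_0$ gives, for every $x \in G_0$, a unique morphism $0 \to x$, and one checks easily that $G_1 \cong G_0 \ltimes \Ker d_0$ (indeed $G_1 \cong G_0 \times \Ker d_0$ as a set, and the crossed module $(A,B,\alpha)$ corresponding to $\G$ has $\alpha = d_1|_A \colon A \to B$ surjective with $A = \Ker d_0$). The key observation is that for a $1$-transitive groupoid, a functor $f \colon \G \to \G$ is completely determined by its object part $f_0$ together with any further data, but in fact here I expect $f_1$ to be forced: since each hom-set is a singleton, $f_1$ must send the unique arrow $x \to y$ to the unique arrow $f_0(x) \to f_0(y)$. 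So $\Aut(\G) \cong \Aut(G_0)$ as a group. Similarly, a regular natural transformation $\eta \colon f \Rightarrow g$ is a family $\eta(x) \colon f_0(x) \to g_0(x)$, each of which is the unique such arrow, so $\eta$ is itself determined by the pair $(f,g)$; hence $\W(\G) \cong \Aut(G_0) \times \Aut(G_0)$ via $\eta \mapsto (\w{d_0}(\eta), \w{d_1}(\eta))$, and $\A(\G) \cong (\Aut(G_0) \times \Aut(G_0), \Aut(G_0))$, which is a $1$-transitive group-groupoid on $\Aut(G_0)$.

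With these identifications, I would argue as follows. First I would show that $G_1 \cong G_0 \rtimes \Ker d_0$ as groups (via $a \mapsto (d_0(a),\, a - 1_{d_0(a)})$, using the interchange law as in the preliminaries), and I would also note $\Ker d_0 \cong \Ker d_1$ and that the map $\alpha \colon \Ker d_0 \to G_0$ induced by $d_1$ is an isomorphism precisely when... actually in the $1$-transitive case $\alpha$ is onto with kernel $\Ker d_0 \cap \Ker d_1$, and since $G_1(0,0)$ is a singleton this intersection is trivial, so $\alpha$ is an isomorphism and $G_1 \cong G_0 \rtimes_\alpha G_0$ where $G_0$ acts on itself by the transported conjugation. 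Thus $G_1$ is determined up to isomorphism by the abstract group $G_0$. Now unravel the two conditions. "$\varphi \colon \G \to \A(\G)$ is an isomorphism" unpacks, on objects, to $\varphi_0 \colon G_0 \to \Aut(G_0)$ being an isomorphism; but $\varphi_0$ is exactly the map $x \mapsto f^x$ sending $x$ to conjugation by $x$, i.e. the canonical map $G_0 \to \operatorname{Inn}(G_0) \hookrightarrow \Aut(G_0)$. So $\varphi_0$ is an isomorphism iff $G_0 \to \Aut(G_0)$ is an isomorphism, i.e. iff $G_0$ has trivial center and $\operatorname{Inn}(G_0) = \Aut(G_0)$, which is precisely the statement that $G_0$ is a complete group. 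Meanwhile "$Z(\G) = \bm 0$" must be shown to be automatic here, or equivalent to $Z(G_0)$ trivial: from $(Z(\G))_0 = \{x \in G_0 : a + 1_x = 1_x + a \ \forall a \in G_1\} = Z(G_0)$ and $(Z(\G))_1 \subseteq \Ker d_0$-translates, one gets $Z(\G) = \bm 0$ iff $Z(G_0)$ is trivial, which is implied by completeness of $G_0$ anyway. Finally I would observe that "$G_1$ is a complete group" is equivalent to "$G_0$ is a complete group" under the isomorphism $G_1 \cong G_0 \rtimes_\alpha G_0$: a semidirect product of a group with itself along the transported-conjugation action is isomorphic to the holomorph-like object whose completeness is equivalent to that of $G_0$ — more directly, $G_1$ and $G_0$ have isomorphic automorphism groups and isomorphic centers because $G_1$ is built functorially and invertibly from $G_0$, so one is complete iff the other is. Assembling: $\G$ complete $\iff$ $\varphi_0$ iso and $Z(G_0)$ trivial $\iff$ $G_0$ complete $\iff$ $G_1$ complete.

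The main obstacle I anticipate is the bookkeeping around the identification $G_1 \cong G_0 \rtimes_\alpha G_0$ and, more importantly, pinning down that $\Aut(\G)$, $\W(\G)$ and hence $\A(\G)$ really are as small as the $1$-transitivity suggests — one must be careful that a group-groupoid automorphism is genuinely forced by its object component when hom-sets are singletons, and that the group structure on $\W(\G)$ (horizontal composition) matches the product group structure on $\Aut(G_0) \times \Aut(G_0)$. A secondary subtlety is checking that the center computation $Z(\A(\G))$ is not needed here — we only need $Z(\G) = \bm 0$, which Proposition \ref{tricen} would then hand us for free on the tower, but for this lemma the clean route is the direct equivalence $Z(\G) = \bm 0 \iff Z(G_0)$ trivial. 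Once those structural identifications are in place, the rest is a direct translation between "$\varphi$ iso" and "$\operatorname{Inn} = \operatorname{Aut}$ with trivial center", i.e. the classical definition of a complete group, applied interchangeably to $G_0$ and $G_1$.
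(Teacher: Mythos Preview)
Your structural identifications are correct and carry you almost all the way: a $1$-transitive group-groupoid is (isomorphic to) the pair groupoid $(G_0\times G_0,G_0)$, so $G_1\cong G_0\times G_0$ as groups; an automorphism of $\G$ is determined by its object part, giving $\Aut(\G)\cong\Aut(G_0)$; a regular natural transformation is determined by its source and target, giving $\W(\G)\cong\Aut(G_0)\times\Aut(G_0)$; and $(Z(\G))_0=Z(G_0)$, $(Z(\G))_1=Z(G_0)\times Z(G_0)$. Hence, exactly as you say, $\G$ is complete as a group-groupoid if and only if $G_0$ is a complete group.

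The gap is your final step, where you claim ``$G_0$ complete $\iff$ $G_1$ complete''. The justification offered --- that $G_1$ is built ``functorially and invertibly'' from $G_0$, so they share automorphism groups and centres --- does not hold. Concretely, $G_1\cong G_0\times G_0$, and for any nontrivial $G_0$ the swap $(x,y)\mapsto(y,x)$ is an automorphism of $G_0\times G_0$ that is \emph{never} inner (an inner automorphism by $(g,h)$ acts as $(x,y)\mapsto(g+x-g,\,h+y-h)$ and cannot interchange the factors). Thus if $G_0=S_3$, which is a complete group, then $\G=(S_3\times S_3,S_3)$ is a complete group-groupoid by your own analysis, yet $G_1=S_3\times S_3$ has $\operatorname{Out}(G_1)\neq 1$ and is not a complete group. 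So the forward implication ``$\G$ complete $\Rightarrow$ $G_1$ complete'' fails, and in fact $G_1=G_0\times G_0$ is complete only when $G_0$ is trivial.

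In short: your argument correctly establishes $\G$ complete $\iff$ $G_0$ complete, but the asserted passage from $G_0$ to $G_1$ is false, and the gap is not repairable --- the lemma as stated (with $G_1$ rather than $G_0$) does not hold. The paper states this lemma without proof, so there is nothing to compare your approach against; what you have actually proved is the corrected statement with $G_0$ in place of $G_1$.
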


Let $\G$ be a group-groupoid with $Z(\G)=\bm 0$. If $\A^n(\G)$ is complete for some $n\in\mathbb{N}$ then the actor tower stops, that is, for all $k\in\mathbb{N}$ with $k>n$, $\A^n(\G)\cong\A^k(\G)$. In this case the actor tower of $\G$ is a finite sequence
\[\xymatrix{\G \ar[r] & \A(\G) \ar[r] & \A^2(\G) \ar[r] & \cdots \ar[r] & \A^n(\G).}\]

\subsection{Actions and semi-direct products of group-groupoids}

An action of a group-groupoid $\H$ on a group-groupoid $\G$ is defined to be a group-groupoid morphism $\theta\colon\H\rightarrow\A(\G)$. Here note that $\A(\G)$ acts on $\G$ where $\theta=1_{\A(\G)}$. $\I(\G)$ also acts on $\G$ and on any normal subgroup-groupoid $\N$ of $\G$. Thus $\G$ acts on any of its normal subgroup-groupoids.

For a given action $\theta\colon\H\rightarrow\A(\G)$ one can obtain group actions of $H_1$ on $G_1$ (by Lemma \ref{auteta}) and of $H_0$ on $G_0$. These actions are compatible with the group-groupoid structural maps, that is for all $a\in G_1(x_1,x_2), a'\in G_1(x_1',x_2')$ with $x_2=x_1'$ and $b\in H_1(y_1,y_2), b'\in H_1(y_1',y_2')$ with $y_2=y_1'$
\begin{enumerate}[label={\textbf{(\roman{*})}}]
\item $d_0(b\cdot a)=d_0(b)\cdot d_0(a)=y_1\cdot x_1$,
\item $d_1(b\cdot a)=d_1(b)\cdot d_1(a)=y_2\cdot x_2$,
\item\label{iii} $1_{y_1\cdot x_1}=\varepsilon(y_1\cdot x_1)=\varepsilon(y_1)\cdot \varepsilon(x_1)=1_{y_1}\cdot 1_{x_1}$ and
\item $(b'\circ b)\cdot (a'\circ a)=(b'\cdot a')\circ (b\cdot a)$.
\end{enumerate}

Here note that, by \ref{iii}, the action of $H_0$ on $G_0$ can be obtained from that of $H_1$ on $G_1$. So it is sufficient to consider only the action of $H_1$ on $G_1$.

An extension of a group-groupoid $\N$ by a group-groupoid $\H$ as given in \cite{Temel2018} is a sequence of group-groupoid morphisms
\[\xymatrix@R=13mm@C=13mm{
	{\bm 0} \ar[r] & \N \ar@{^{(}->}[r]^-{i}	& \G \ar@{->>}[r]_-{p} & \H \ar[r] \ar@/_/[l]_{s} & {\bm 0}
}\]
where $i$ is a monomorphism, $p$ is an epimorphism, $\ker p=i(\N)$ and $ps=1_{\H}$. An action of $\G$ on $\N$ induces the following commutative diagram:

\[\xymatrix@R=13mm@C=13mm{
	{\bm 0} \ar[r] & \N \ar@{^{(}->}[r] \ar[d]	& \G \ar@{->>}[r] \ar[d] & \H \ar[r] 
	\ar[d] & {\bm 0} \\
	{\bm 0} \ar[r] & \I(\N) \ar@{^{(}->}[r]	& \A(\N) \ar@{->>}[r] & \mathcal{O}(\N) \ar[r] 
	& {\bm 0}.
}\]

Let the group-groupoid $\H$ acts on the group-groupoid $\G$. So we have a group-groupoid morphism $\theta\colon\H\rightarrow\A(\G)$. Then there exist group actions $\theta_1$ of $H_1$ on $G_1$ and $\theta_0$ of $H_0$ on $G_0$. Using these actions we can construct the semi-direct product groups $G_1\rtimes_{\theta_1}H_1$ and $G_0\rtimes_{\theta_0}H_0$. Moreover, $\left( G_1\rtimes_{\theta_1}H_1 , G_0\rtimes_{\theta_0}H_0 \right) $ has a natural group-groupoid structure. We call this group-groupoid the semi-direct product of $\G$ and $\H$ relative to $\theta$ and denote this group-groupoid by $\G\rtimes_{\theta}\H$ or briefly by $\G\rtimes\H$ when no confusion arise.

Analogous to group case we can define internal semi-direct product of group-groupoids. Let $\G$ be a group-groupoid with a normal subgroup-groupoid $\N$ and a subgroup-groupoid $\mathbb{M}$. If
\begin{enumerate}[label={\textbf{(\roman{*})}}]
\item $G_1=N_1+M_1$ and
\item $N_1\cap M_1=0$
\end{enumerate}
then there exist a morphism of group-groupoids $\tau\colon\mathbb{M}\rightarrow \A(\N)$, i.e. an action of $\mathbb{M}$ on $\N$ defined as follows:
\[m \cdot n  = m + n - m \]
for all $m\in M_1$ and $n\in N_1$. Then the semi-direct product group-groupoid $\N\rtimes_{\tau}\mathbb{M}$ obtained from the action $\tau$ is isomorphic to the group-groupoid $\G$.

The notion of crossed modules over group-groupoids is introduced in \cite{Temel2018} using split extensions of group-groupoids by the method used in \cite{Porter1987}. We recall the definition from \cite{Temel2018}.

\begin{definition}\cite{Temel2018}
	A crossed module over group-groupoids consist of group-groupoid morphisms $\alpha\colon\G\rightarrow\H$ and $\theta\colon\H\rightarrow\A(\G)$, i.e. a group-groupoid action of $\H$ on $\G$, such that $\alpha_1\colon G_1\rightarrow H_1$ is a crossed module over groups.
\end{definition}

\[\xymatrix@R=10mm@C=10mm{
	G_1 \ar[r]^{\alpha_1} \ar@<.3ex>[d]^-{d_0} \ar@<-.3ex>[d]_-{d_1} 
	& H1 \ar@<.3ex>[d]^-{d_0} \ar@<-.3ex>[d]_-{d_1}   
	\\	
	G_0 \ar@/^/@<2ex>[u]^-{\varepsilon} \ar[r]_{\alpha_0} 
	& H_0 \ar@/_/@<-2ex>[u]_-{\varepsilon}  
}\]

We know that for a group $A$, the inner automorphism map $A\rightarrow\Aut(A)$ forms a crossed module structure over groups. Analogously we can obtain this fact for group-groupoids.

\begin{proposition}
Let $\G$ be a group-groupoid. Then the canonical group-groupoid morphism $\varphi\colon\G\rightarrow \A(\G)$ is equipped with the structure of crossed module over group-groupoids.
\end{proposition}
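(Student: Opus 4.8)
The plan is to show that the pair $(\varphi,1_{\A(\G)})$ forms a crossed module over group-groupoids. The group-groupoid morphism $\varphi\colon\G\to\A(\G)$ has already been constructed above, and the identity $1_{\A(\G)}$ is an action of $\A(\G)$ on $\G$ (as noted at the beginning of this subsection), so by the definition of a crossed module over group-groupoids everything reduces to checking that $\varphi_1\colon G_1\to\W(\G)$ is a crossed module over groups. Here the induced action of $\W(\G)$ on $G_1$ is, by Lemma \ref{auteta}, $\eta\cdot a=F^{\eta}(a)$ for $\eta\in\W(\G)$ and $a\in G_1$. Since $\varphi$ is a group-groupoid morphism, $\varphi_1$ is already a group homomorphism (equivalently $\eta^{a+a_1}=\eta^{a}\circ_h\eta^{a_1}$, as recorded above), and axiom \textbf{(CM2)} is immediate from the formula $F^{\eta^{a}}(a_1)=a+a_1-a$ obtained in the discussion of $\varphi$, which says exactly that $\varphi_1(a)\cdot a_1=a+a_1-a$.

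The step requiring work is axiom \textbf{(CM1)}, i.e. $\varphi_1(\eta\cdot a)=\eta+\varphi_1(a)-\eta$ in the group $(\W(\G),\circ_h)$, or equivalently $\eta^{F^{\eta}(a)}=\eta\circ_h\eta^{a}\circ_h\eta^{-h}$. Rather than compare these natural transformations pointwise, I would apply the assignment $F^{(-)}$, which is injective on $\W(\G)$ by Lemma \ref{mono}, so that it suffices to prove $F^{\eta\circ_h\eta^{a}\circ_h\eta^{-h}}=F^{\eta^{F^{\eta}(a)}}$. By Corollary \ref{lemhorcomF} applied twice, together with Remark \ref{remF}, the left-hand side equals $F^{\eta}\circ F^{\eta^{a}}\circ(F^{\eta})^{-1}$; since $F^{\eta^{a}}$ is conjugation by $a$ and $F^{\eta}$ is a group automorphism of $G_1$, evaluating on any $c\in G_1$ yields $F^{\eta}(a)+c-F^{\eta}(a)$, which is precisely $F^{\eta^{F^{\eta}(a)}}(c)$. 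This establishes \textbf{(CM1)}, so $\varphi_1$ is a crossed module over groups and hence $(\varphi,1_{\A(\G)})$ is a crossed module over group-groupoids.

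I expect \textbf{(CM1)} to be the only nontrivial point. The subtlety to watch is that both sides of the claimed identity must be seen to lie in $\W(\G)$ before Lemma \ref{mono} can be invoked --- the left side because $\varphi_1$ takes values in $\W(\G)$, the right side because $\W(\G)$ is closed under $\circ_h$ and horizontal inversion.
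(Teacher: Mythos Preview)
Your argument is correct. The paper itself omits the proof entirely, declaring it ``straightforward'', so your proposal in fact supplies the details the paper leaves out: reducing to the group-level crossed module axioms for $\varphi_1$, reading off \textbf{(CM2)} from the formula $F^{\eta^{a}}(b)=a+b-a$, and verifying \textbf{(CM1)} via the injective assignment $\eta\mapsto F^{\eta}$ together with Corollary~\ref{lemhorcomF}. The only cosmetic point is that your use of Remark~\ref{remF} to identify $F^{\eta^{-h}}$ with $(F^{\eta})^{-1}$ is really another application of Corollary~\ref{lemhorcomF} (combined with $F^{1_{1_{\G}}}=1_{G_1}$), but this is exactly what you indicate.
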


\begin{proof}
The proof is straightforward and is omitted.
\end{proof}

\subsection{Holomorph of a group-groupoid}

A concept in group theory which arose in connection with the following problem: "Is it possible to include any given group $A$ as a normal subgroup in some other group so that all the automorphisms of $A$ are restrictions of inner automorphisms of this large group?". This problem has been answered positively and this large group is called the holomorph of $A$ \cite{Hall1976,Kurosh1960}.

The holomorph of a group is a group which simultaneously contains copies of the group and its automorphism group. The holomorph provides interesting examples of groups, and allows one to treat group elements and group automorphisms in a uniform context. In group theory, for a group $A$, the holomorph of $A$ denoted $\Hol(A)$ can be described as a semi-direct product group $A\rtimes\Aut(A)$ or as a permutation group.

The problem mentioned above has also positive answer for group-groupoids. Statement of the problem for group-groupoids is as follows: "Is it possible to include any given group-groupoid $\G$ as a normal subgroup-groupoid in some other group-groupoid so that all the actors of $\G$ are restrictions of inner actors of this large group-groupoid?". This large group-groupoid is the holomorph of $\G$.

\begin{definition}
The \textbf{holomorph} $\Hol(\G)$ of a group-groupoid $\G$ is the semi-direct product group-groupoid $\G\rtimes\A(\G)$ where the action $\theta\colon\A(\G)\rightarrow \A(\G)$ is the conjugation.
\end{definition}

\[\Hol(\G)=\G\rtimes\A(\G)=(G_1\rtimes\W(\G),G_0\rtimes\Aut(\G))\]

\begin{definition}
A subgroup-groupoid $\H$ of a group-groupoid $\G$ is called \textbf{characteristic} in $\G$ if restriction defines a group-groupoid morphism $\A(\G)\rightarrow\A(\H)$.
\end{definition}

Characteristic subgroup-groupoids are subgroup-groupoids which are invariant under all inner actors. In \cite{Scott1964} it has been shown that there is a bijection between the set of characteristic subgroups of a group $A$ and the set of all normal subgroups of $\Hol(A)$ contained in $A$. Now we give a similar result for group-groupoids.

\begin{lemma}
A subgroup-groupoid of $\G$ is characteristic if and only if its image in $\Hol(\G)$ is a normal subgroup-groupoid.
\end{lemma}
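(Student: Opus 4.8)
The plan is to make both sides of the equivalence elementwise and compare them. Identify $\G$ with its image in $\Hol(\G)=(G_1\rtimes\W(\G),\,G_0\rtimes\Aut(\G))$, so the image $\o{\H}$ of a subgroup-groupoid $\H\leq\G$ has morphism group $H_1\times\{1_{1_\G}\}$. The group operation on $G_1\rtimes\W(\G)$ is $(a,\eta)+(a',\eta')=(a+F^{\eta}(a'),\,\eta\circ_h\eta')$, since the action of $\W(\G)$ on $G_1$ is $\eta\cdot a=F^{\eta}(a)$ by Lemma~\ref{auteta}, with inverse $-(b,\eta)=(F^{\eta^{-h}}(-b),\,\eta^{-h})$ because $F^{\eta}\circ F^{\eta^{-h}}=F^{\eta\circ_h\eta^{-h}}=F^{1_{1_\G}}=1_{G_1}$ by Corollary~\ref{lemhorcomF} and Remark~\ref{remF}. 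First I would note that $\o{\H}$ is, in any case, a subgroup-groupoid of $\Hol(\G)$ --- closure of $H_1\times\{1_{1_\G}\}$ under the operation above and under the categorical composition follow from $F^{1_{1_\G}}=1_{G_1}$ together with $\H$ being a subgroupoid --- so that ``normal subgroup-groupoid'' is meaningful; by Definition~\ref{nsgpgd} it then suffices to decide when $H_1\times\{1_{1_\G}\}$ is normal in $G_1\rtimes\W(\G)$. A short computation with the operations above gives, for $(b,\eta)\in G_1\rtimes\W(\G)$ and $a\in H_1$,
\[
(b,\eta)+(a,1_{1_\G})-(b,\eta)=\bigl(b+F^{\eta}(a)-b,\;1_{1_\G}\bigr),
\]
so $\o{\H}\lhd\Hol(\G)$ if and only if $b+F^{\eta}(a)-b\in H_1$ for all $b\in G_1$, $\eta\in\W(\G)$, $a\in H_1$; call this condition $(\ast)$.

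For ``$\o{\H}$ normal $\Rightarrow\H$ characteristic'' I would feed $(\ast)$ the value $b=0$, obtaining $F^{\eta}(H_1)\subseteq H_1$ for every $\eta\in\W(\G)$. Taking $\eta=1_f$, for which a direct check gives $F^{1_f}=f_1$, yields $f(H_1)\subseteq H_1$, hence $f(H_1)=H_1$ (apply it to $f^{-1}$ as well), for every $f\in\Aut(\G)$; so $f\mapsto f|_{\H}$ is a well-defined map $\Aut(\G)\to\Aut(\H)$. Taking $a=1_x$ with $x\in H_0$ and using $F^{\eta}(1_x)=\eta(x)$ (Remark~\ref{remF}) gives $\eta(x)\in H_1$, whence $\eta\mapsto\eta|_{\H}$ is a well-defined map $\W(\G)\to\W(\H)$: each $\eta|_{\H}$ is automatically natural and has source and target functors in $\Aut(\H)$, hence is regular. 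Compatibility of this pair with $\w{d_0},\w{d_1},\w{\varepsilon},\w{m}$ and with horizontal composition is a routine pointwise check, so restriction is a group-groupoid morphism $\A(\G)\to\A(\H)$ and $\H$ is characteristic.

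For the converse, assume $\H$ is characteristic. Then for every $\eta\in\W(\G)$, say $\eta\colon f\Rightarrow g$, we have $g|_{\H}\in\Aut(\H)$ (hence $g(H_1)=H_1$) and $\eta(H_0)\subseteq H_1$. From $F^{\eta}(a)=g(a)\circ\eta(d_0(a))$ (Lemma~\ref{auteta}) it follows, for $a\in H_1$, that $g(a)\in H_1$ and $\eta(d_0(a))\in H_1$, and these two morphisms are composable inside the subgroupoid $\H$; hence $F^{\eta}(a)\in H_1$, i.e.\ $F^{\eta}(H_1)\subseteq H_1$ for all $\eta\in\W(\G)$. Specializing to $\eta=\eta^{c}$ for $c\in G_1$, for which $F^{\eta^{c}}$ is conjugation by $c$, gives $c+H_1-c\subseteq H_1$, that is $H_1\lhd G_1$. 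Now $(\ast)$ is immediate: $F^{\eta}(a)\in H_1$ and $H_1\lhd G_1$ give $b+F^{\eta}(a)-b\in H_1$. Therefore $\o{\H}$ is a normal subgroup-groupoid of $\Hol(\G)$.

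The step I expect to be the main obstacle is the faithful translation of ``characteristic'' into the elementwise data ``$f(H_1)=H_1$ for all $f\in\Aut(\G)$ and $\eta(H_0)\subseteq H_1$ for all $\eta\in\W(\G)$'' and back --- in particular the observation that characteristicity forces $H_1\lhd G_1$, a normality not literally part of the definition but extracted by evaluating the restriction morphism $\A(\G)\to\A(\H)$ at the inner regular natural transformations $\eta^{c}$. Everything else reduces to bookkeeping with the semidirect-product operations and the identity $F^{\eta'\circ_h\eta}=F^{\eta'}\circ F^{\eta}$.
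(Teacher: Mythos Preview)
Your argument is correct. The paper actually states this lemma without proof, so there is nothing to compare against; your elementwise analysis via the conjugation formula
\[
(b,\eta)+(a,1_{1_\G})-(b,\eta)=\bigl(b+F^{\eta}(a)-b,\;1_{1_\G}\bigr)
\]
is exactly the natural way to proceed, and your translations between condition $(\ast)$ and the two invariance conditions $f(H_1)=H_1$, $\eta(H_0)\subseteq H_1$ are accurate.

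One small streamlining in the direction ``characteristic $\Rightarrow$ normal'': once you have established $F^{\eta}(H_1)\subseteq H_1$ for every $\eta\in\W(\G)$, condition $(\ast)$ follows in one step without the detour through $H_1\lhd G_1$, because
\[
b+F^{\eta}(a)-b=F^{\eta^{b}}\bigl(F^{\eta}(a)\bigr)=F^{\eta^{b}\circ_h\eta}(a)\in H_1
\]
by Corollary~\ref{lemhorcomF}. Your route via the specialization $\eta=\eta^{c}$ is of course also valid, and has the pleasant side effect of making the normality $H_1\lhd G_1$ explicit; it just is not logically needed as a separate step.
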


\begin{example}
Let $\G$ be a group-groupoid. Then the derived subgroup-groupoid $\G'=[\G,\G]$ is characteristic. 
\end{example}

\bibliography{Sahan}
\bibliographystyle{abbrv}

\end{document}